\theoremstyle{plain}
  \newtheorem{thm}{Theorem}[section]
  \newtheorem{lem}[thm]{Lemma}
  \newtheorem{prop}[thm]{Proposition}
  \newtheorem{cor}[thm]{Corollary} 
	\newtheorem*{thm*a}{Theorem A}
	\newtheorem*{thm*b}{Theorem B}
\theoremstyle{definition}
  \newtheorem{defn}[thm]{Definition}
  \newtheorem{rmk}[thm]{Remark}
  \newtheorem*{ack*}{Acknowledgement}
  \newtheorem*{ques*}{Question}
\theoremstyle{plain}
\numberwithin{equation}{section}
\newcommand\ip[2]{\langle{#1},{#2}\rangle}
\newcommand\pl{\partial}
\newcommand\oh{\frac{1}{2}}
\newcommand\dd{{\mathrm d}}
\newcommand\sm{\sigma}
\newcommand\dt{\delta}
\newcommand\ep{\epsilon}
\newcommand\vep{\varepsilon}
\newcommand\vph{\varphi}
\newcommand\ta{\theta}
\newcommand\ld{\lambda}
\newcommand\Om{\Omega}
\newcommand\Gm{\Gamma}
\newcommand\Ld{\Lambda}
\newcommand\Dt{\Delta}
\newcommand\CP{\mathcal{P}}
\newcommand\BR{\mathbb{R}}
\newcommand\BN{\mathbb{N}}
\newcommand\td{\tilde}
\newcommand\bu{\mathbf{u}}
\newcommand\bv{\mathbf{v}}
\newcommand\bfI{\mathbf{I}}
\newcommand\ii{\sqrt{-1}}
\DeclareMathOperator{\tr}{tr}
\DeclareMathOperator{\supp}{supp}
\newcommand\ST{S^{[2]}}
\newcommand\heat{\frac{\pl}{\pl t} - \Dt}
\begin{document}

\title{Entire solutions of two-convex Lagrangian\\
mean curvature flows}

\author{Chung-Jun Tsai}
\address{Department of Mathematics, National Taiwan University, Taipei 10617, Taiwan}
\email{cjtsai@ntu.edu.tw}

\author{Mao-Pei Tsui}
\address{Department of Mathematics, National Taiwan University, Taipei 10617, Taiwan}
\email{maopei@math.ntu.edu.tw}

\author{Mu-Tao Wang}
\address{Department of Mathematics, Columbia University, New York, NY 10027, USA}
\email{mtwang@math.columbia.edu}


\thanks{C.-J.~Tsai is supported in part by the National Science and Technology Council grants 112-2636-M-002-003 and 112-2628-M-002-004-MY4.  M.-P.~Tsui is supported in part by the National Science and Technology Council grants 109-2115-M-002-006 and 112-2115-M-002-015-MY3. This material is based upon work supported by the National Science Foundation under Grant Numbers DMS-1810856 and DMS-2104212 (Mu-Tao Wang).  Part of this work was carried out when M.-T.~Wang was visiting the National Center of Theoretical Sciences.}

\begin{abstract}
Given an entire $C^2$ function $u$ on $\mathbb{R}^n$, we consider the graph of $D u$ as a Lagrangian submanifold of $\mathbb{R}^{2n}$, and deform it by the mean curvature flow in $\mathbb{R}^{2n}$.  This leads to the special Lagrangian evolution equation, a fully nonlinear Hessian type PDE.  We prove long-time existence and convergence results under a 2-positivity assumption of $(I+(D^2 u)^2)^{-1}D^2 u$. Such results were previously known only under the stronger assumption of positivity of $D^2 u$.

\end{abstract}

\maketitle

\section{Introduction}

Fundamental to a fully nonlinear Hessian type 
partial differential equation is the convexity assumption, which guarantees ellipticity or parabolicity and facilitates essential regularity
estimates. Among these equations, the special 
Lagrangian equation is rather unique in that the variational structure always implies ellipticity or parabolicity without the need of any convexity assumption. Still most previous results were obtained under the convexity condition or its equivalence. In \cite{TTW22l}, a substantial improvement was achieved by removing the convexity assumption and replacing it by the two-convexity assumption. The main purpose of this paper is to extend results in \cite{TTW22l} from the compact setting to the non-compact global setting. Essential difficulties of this extension were dealt with by several new ingredients in this article. These include two new evolution equations (Proposition \ref{prop_starOm} and Proposition \ref{prop_detST}) which allow us
to localize estimates in \cite{TTW22l} in the global setting. 
There is also a new regularization procedure that works without the convexity assumption. In particular, new non-convex self-expanders of Lagrangian mean curvature flows were discovered.

Given an entire $C^2$ function $u_0$ on $\mathbb{R}^n$, we consider the graph of $D u_0$ as a Lagrangian submanifold of $\mathbb{R}^{2n}$ and deform it by the mean curvature flow, which corresponds to the following initial value problem for the potential function $u$:
\begin{align} \begin{split}
    \frac{\pl u}{\pl t} &= \frac{1}{\ii} \log \frac{\det(\bfI + \sqrt{-1}D^2u)}{\sqrt{\det(\bfI+(D^2u)^2)}} \\
    \text{with } & u(x,0) = u_0(x) ~.
\end{split} \label{LMCF0} \end{align}

As remarked above, this is a fully nonlinear Hessian type parabolic equation. 
We assume that $u_0$ is {\it 2-convex}, which is a 2-positivity assumption in terms of $(I+(D^2u_0)^2)^{-1} D^2 u_0$(see Definition \ref{def_2convex} for the definition in terms of the eigenvalues of $D^2u_0$). This is a natural quantity as it corresponds to the Hessian of $u$ as measured with respect to the induced metric on the corresponds Lagrangian submanifold.

The main result of this paper is the following long-time existence result.
\begin{thm}[Theorem \ref{thm_2convexini}] \label{thm_main}
Let $u_0\in C^2(\BR^n)$ be a $2$-convex function with $\sup_{x\in\BR^n}|D^2 u_0|^2\leq c$ for some $c>0$.   Then, \eqref{LMCF0} admits a unique solution $u(x,t)$ in the space $C^0(\BR^n\times[0,\infty))\cap C^\infty(\BR^n\times(0,\infty))$ such that
    \begin{itemize}
        \item for any $t>0$, $u$ is $2$-convex;
        \item there exists $c_\ell = c_\ell(c) >0$ for any $\ell\geq 2$ such that $\sup_{x\in\BR^n}|D^\ell u|^2 \leq c_\ell t^{2-\ell}$ for any $t>0$.
    \end{itemize}
\end{thm}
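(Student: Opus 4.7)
The plan is to obtain the entire solution as a limit of solutions to approximating problems, with the three novel ingredients announced in the introduction -- the new regularization procedure and Propositions \ref{prop_starOm} and \ref{prop_detST} -- providing the localized estimates needed in the non-compact setting.

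The first step is to regularize $u_0$. Standard mollification need not preserve the 2-convexity condition on $(\bfI+(D^2u_0)^2)^{-1}D^2u_0$, so the paper's new regularization procedure is essential: the output should be a family $u_0^\vep$ of smooth, uniformly 2-convex functions with $\sup_{\BR^n}|D^2 u_0^\vep|^2 \leq c'(c)$ and $u_0^\vep \to u_0$ locally. For each such smooth initial datum I would produce a solution on an exhausting family of tori (obtained by periodic extension, say after composing with a large affine deformation to ensure the mean Hessian is controlled) via the compact long-time existence theorem of \cite{TTW22l}, which is known to preserve 2-convexity.

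The heart of the proof is the derivation of a priori estimates uniform in the approximation parameters. Propositions \ref{prop_starOm} and \ref{prop_detST} give evolution equations for a Lagrangian-angle type quantity associated to $*\Om$ and for $\det \ST$ with reaction terms strong enough that, after multiplying by a Euclidean cutoff pulled back from an ambient ball in $\BR^{2n}$, the parabolic maximum principle produces bounds on $|D^2 u|$ and a positive lower bound for $\det \ST$ on every parabolic cylinder $B_R \times [0,T]$, independently of the approximation. Using the cutoff on the ambient $\BR^{2n}$ rather than in the base $x$-variables is what exploits the properness of the Lagrangian graph and is precisely what allows the compact arguments of \cite{TTW22l} to be localized. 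Once $|D^2 u|$ is locally bounded, the equation becomes uniformly parabolic on compact sets, and the higher-order bounds $|D^\ell u|^2 \leq c_\ell\, t^{2-\ell}$ follow by a parabolic Bernstein/Ecker--Huisken argument: differentiate the equation $\ell-2$ times and apply the maximum principle to the cutoff-weighted quantity $\eta \cdot t^{\ell-2} |D^\ell u|^2$, the scaling $t^{2-\ell}$ being dictated by the parabolic nature of the system. Passing to the limit $\vep \to 0$ while enlarging the torus yields a solution in $C^0(\BR^n \times [0,\infty)) \cap C^\infty(\BR^n \times (0,\infty))$; uniqueness follows by applying the maximum principle to the quasilinear equation satisfied by the difference of two solutions, again on each ambient ball.

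The main obstacle is the localization step. In \cite{TTW22l} the maximum principle acts on the closed torus and no cutoff is needed; on $\BR^n$ the extrema of $|D^2 u|$ or of $\det \ST$ may be attained only at infinity, and one must show that the cutoff error terms produced when applying the maximum principle to $\eta \cdot f$ in Propositions \ref{prop_starOm}--\ref{prop_detST} are dominated by the favorable gradient and curvature terms on the right-hand side. Engineering the correct geometric test quantity $f$ so that this absorption works -- ideally with $f$ built from $*\Om$ and $\det \ST$ in a way that is monotone under the flow up to ambient-cutoff errors -- is the technical crux; without it the compact estimates of \cite{TTW22l} cannot be transferred to the entire setting.
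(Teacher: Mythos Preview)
Your localization idea is essentially what the paper does: Lemma~\ref{lem_max} is exactly the Ecker--Huisken cutoff $\vph=R^2-|F|^2-2nt$ in the ambient $\BR^{2n}$, and Propositions~\ref{prop_starOm} and~\ref{prop_detST} are engineered so that $(*\Om)^{-1}$ and $(\det\ST)^{-1}$ satisfy a differential inequality of the form $(\heat)\log v\le -q|\nabla\log v|^2$, which is precisely what the cutoff argument absorbs. So the ``technical crux'' you identify is real and is handled the way you outline.

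The genuine gap is your production of approximating solutions via periodic extension and the compact result of \cite{TTW22l}. A $2$-convex function cannot be made periodic while remaining $2$-convex: summing $\ld_i+\ld_j\ge 0$ over pairs forces $\tr D^2u\ge 0$, which on a torus forces $\tr D^2u\equiv 0$ and hence all $\ld_i\equiv 0$; no affine deformation repairs this. The paper avoids tori entirely. It works directly on $\BR^n$: the regularization adds an explicit radial function $F_k$ (Lemma~\ref{lem_booster}) that leaves $u_0+F_k$ strictly $2$-convex everywhere but \emph{convex} outside $B_k(0)$, so that standard mollification preserves the condition (Lemma~\ref{lem_pos_cone} is the cone lemma making this work). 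This produces smooth data with \emph{all} derivatives bounded on $\BR^n$, to which the short-time existence of \cite{CCY13} (Proposition~\ref{prop_short_time}) applies directly. Long-time existence is then not inherited from any compact theorem but proved by a blow-up argument: if $D^3u$ blows up, rescale to an ancient solution with bounded Hessian, invoke the Liouville theorem of \cite{NY11} (Proposition~\ref{prop_Liouville}) and the Bernstein theorem of \cite{TW02} to reach a contradiction. The decay $|D^\ell u|^2\le c_\ell t^{2-\ell}$ comes from the Nguyen--Yuan a~priori estimate (Theorem~\ref{thm_apriori}) plus parabolic rescaling, not a hand-rolled Bernstein iteration; and uniqueness is quoted from Chen--Pang \cite{CP09} rather than argued via the difference equation. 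Finally, the passage from strictly $2$-convex to $2$-convex initial data is a separate short step: approximate by $u_0+e^{-k}|x|^2$ and take a limit (Theorem~\ref{thm_2convexini}).
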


Depending on the asymptotic behavior of the initial data, we prove the following convergence theorems.

\begin{thm}[Theorem \ref{thm_bounded_gradient}]
    Let $u_0\in C^2(\BR^n)$ be a $2$-convex function with $\sup_{x\in\BR^n}|D^2 u_0|^2\leq c$ for some $c>0$.   Denote by $u(x,t)$ the solution to \eqref{LMCF0} with $u(x,0) = u_0(x)$ given by Theorem \ref{thm_main}.  Suppose that there exists a constant $c_1>0$ so that $|Du_0|^2\leq c_1$ on $\BR^n$.  Then, there exists an $\vec{a}\in\BR^n$ such that $Du(x,t)$ converges to the constant map from $\BR^n$ to $\vec{a}$ in $C^\infty_{\text{loc}}(\BR^n,\BR^n)$.  
\end{thm}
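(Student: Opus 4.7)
The strategy is to combine preservation of the $C^1$-bound, the higher-derivative decay of Theorem~\ref{thm_main}, and a Liouville-type rigidity for subsequential limits, and then to upgrade subsequential convergence to a single limit $\vec a$ using a maximum-principle argument applied componentwise.

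First, I would show that the gradient bound is preserved. Differentiating \eqref{LMCF0}, each component $u_i := \partial_i u$ satisfies the linear parabolic equation
\[
\partial_t u_i \;=\; \tr\!\bigl((I+(D^2u)^2)^{-1}\,D^2 u_i\bigr),
\]
whose coefficients are bounded and uniformly elliptic thanks to the $C^2$-estimate of Theorem~\ref{thm_main}. A standard maximum principle for bounded solutions on $\BR^n$ then gives $\sup_{x}|u_i(\cdot,t)|\le\sup_x|u_i(\cdot,0)|$, so $|Du(\cdot,t)|^2\le c_1$ is maintained for all $t>0$.

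Second, given any sequence $t_k\to\infty$, the functions $u(\cdot,t_k)-u(0,t_k)$ are locally bounded (by $c_1^{1/2}|x|$), their second derivatives are globally bounded by $c^{1/2}$, and for every $\ell\ge 3$ one has $|D^\ell u|\le c_\ell^{1/2}\,t_k^{1-\ell/2}\to 0$. Arzelà--Ascoli with a diagonal extraction yields a $C^\infty_{\text{loc}}$-subsequential limit $u_\infty\in C^\infty(\BR^n)$ with $D^\ell u_\infty\equiv 0$ for $\ell\ge 3$, so $u_\infty$ is a quadratic polynomial. Bounded gradient $|Du_\infty|\le c_1^{1/2}$ on all of $\BR^n$ forces the quadratic part to vanish, hence $u_\infty(x)=\vec{a}\cdot x+c$ is affine. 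Along any such subsequence we therefore obtain $D^2 u(\cdot,t_k)\to 0$ and $Du(\cdot,t_k)-Du(0,t_k)\to 0$ in $C^\infty_{\text{loc}}$; since every accumulation point has this property, in fact $D^2u(\cdot,t)\to 0$ in $C^\infty_{\text{loc}}$, and $\mathrm{osc}_{K}Du(\cdot,t)\to 0$ for every compact $K\subset\BR^n$.

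Third, to identify a unique limiting vector, I would apply the maximum principle of Step~1 componentwise: $M_i(t):=\sup_x u_i(\cdot,t)$ is non-increasing and $m_i(t):=\inf_x u_i(\cdot,t)$ is non-decreasing, so both converge to values $M_i^\infty\ge m_i^\infty$. Choosing $x_k,y_k$ with $u_i(x_k,t_k)\to M_i^\infty$, $u_i(y_k,t_k)\to m_i^\infty$, translating the flow to these base points, and rerunning the Liouville argument of Step~2 produces affine subsequential limits whose $i$-th gradient components equal $M_i^\infty$ and $m_i^\infty$ respectively. When $|x_k-y_k|$ remains bounded, the two translated limits share a common $C^\infty_{\text{loc}}$-window, and comparison of their (constant) gradients forces $M_i^\infty=m_i^\infty$; when $|x_k-y_k|\to\infty$, an intermediate-value argument along the segment $[x_k,y_k]$ combined with translation invariance of LMCF yields the same equality. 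It follows that $\vec a(t):=Du(0,t)$ converges to $\vec a=(M_1^\infty,\ldots,M_n^\infty)$, and combined with Step~2 this gives $Du(\cdot,t)\to\vec a$ in $C^\infty_{\text{loc}}$.

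\textbf{Main obstacle.} The delicate step is uniqueness of $\vec a$. The naïve bound $|\partial_t Du|\le Ct^{-1/2}$ extracted from Theorem~\ref{thm_main} is not time-integrable, so uniqueness of the $\omega$-limit of $Du(0,t)$ cannot be deduced by a mere ODE estimate. One must rather exploit the interplay between the global sup/inf monotonicity of each $u_i$, the translation invariance of LMCF, and the local Hessian decay of Step~2, and in particular handle carefully the case where the maximizing and minimizing sequences escape to infinity in different directions.
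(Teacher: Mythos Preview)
Steps~1 and~2 are correct and track the paper closely. Two cosmetic differences: the paper computes the evolution of $c_1-|Du|^2$ as a single supersolution and applies Friedman's maximum principle, rather than handling each component $u_i$ separately; and to identify the subsequential limit the paper appeals to the Bernstein theorem of \cite{TW02} (the limiting graph is minimal Lagrangian with bounded Hessian and $1+\lambda_i\lambda_j\ge 0$, hence an affine plane), whereas your observation that $D^3u_\infty\equiv 0$ forces $u_\infty$ to be quadratic, and then bounded gradient kills the quadratic part, is more elementary and equally effective here. Both routes establish that every $C^\infty_{\mathrm{loc}}$ accumulation point of $Du(\cdot,t)$ as $t\to\infty$ is a constant vector.

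The genuine gap is in Step~3. You rightly flag that $|\partial_t Du|\le Ct^{-1/2}$ is not integrable, so uniqueness of $\vec a$ is the real issue. The monotonicity of $M_i(t)$ and $m_i(t)$ is valid, and your Case~1 ($|x_k-y_k|$ bounded) works. Case~2, however, does not close: knowing that $u_i$ hits every value in $(m_i^\infty,M_i^\infty)$ somewhere on the segment $[x_k,y_k]$, and that every translated subsequential limit of $Du$ is a constant vector, is entirely consistent with $M_i^\infty>m_i^\infty$---different base-point sequences simply produce different constants, and nothing in your sketch forces them to agree. For bounded solutions of a linear uniformly parabolic equation on $\BR^n$ this non-convergence actually occurs (take $w_0(x)=\sin(\log(1+|x|^2))$ under the heat flow: $w(0,t)$ oscillates as $t\to\infty$ while $\sup w$ and $\inf w$ both converge), so an argument using only the scalar maximum principle for $u_i$ together with translation invariance cannot succeed; some input specific to the structure of \eqref{LMCF0} is required. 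The paper's own proof is terse at exactly this juncture---it simply asserts that the higher-order decay yields $Du(\cdot,t)\to p$ in $C^\infty_{\mathrm{loc}}$ and then invokes Bernstein---so you have correctly isolated the one delicate step, but your outline does not yet supply the missing ingredient.
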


In other words, $L_u = \{(x,Du(x,t)):x\in\BR^n\}$ converges locally smoothly to $\BR^n\times\{\vec{a}\}$ as $t\to\infty$.

\begin{thm}[Theorem \ref{thm_conv_expander}]
    Let $u_0\in C^2(\BR^n)$ be a $2$-convex function with $\sup_{x\in\BR^n}|D^2 u_0|^2\leq c$ for some $c>0$, and
    \begin{align*}
        \lim_{\mu\to\infty} \frac{u_0(\mu x)}{\mu^2} &= U_0(x) ~,
    \end{align*}
    for some $U_0(x)$.  Let $u(x,t)$ be the solution to \eqref{LMCF0} given by Theorem \ref{thm_main}.  Then, $u(\mu x,\mu^2 t)/\mu^2$ converges to a smooth self-expanding solution $U(x,t)$ to \eqref{LMCF0} in $C^\infty_{\text{loc}}(\BR^n\times(0,\infty))$ as $\mu\to\infty$.  As $t\to 0$, $U(x,t)$ converges to $U_0(x)$ in $C^0_{\text{loc}}(\BR^n)$.
\end{thm}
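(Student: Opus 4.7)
The plan is to exploit the parabolic scaling invariance of \eqref{LMCF0}: if $u$ is a solution, then so is $u_\mu(x,t) := \mu^{-2} u(\mu x, \mu^2 t)$, because the right-hand side of \eqref{LMCF0} equals the Lagrangian phase $\sum_i \arctan \lambda_i$ in the eigenvalues of $D^2 u$, which is a scale-invariant function of $D^2 u$. The rescaled initial datum $u_\mu(x,0) = \mu^{-2} u_0(\mu x)$ inherits both the Hessian bound $|D^2 u_\mu(\cdot,0)|^2 \leq c$ and the $2$-convexity, so applying Theorem \ref{thm_main} to each $u_\mu$ yields the derivative estimates $\sup_x |D^\ell u_\mu(x,t)|^2 \leq c_\ell\, t^{2-\ell}$ for every $\ell \geq 2$ and $t>0$, with $c_\ell$ depending only on $c$ and $\ell$ and in particular independent of $\mu$.

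To extract a limit, I combine the pointwise bound $|\partial_t u_\mu| \leq n\pi/2$ (whence $|u_\mu(x,t) - u_\mu(x,0)| \leq (n\pi/2)\,t$) with the hypothesis $u_\mu(\cdot,0) \to U_0$ in $C^0_{\text{loc}}(\BR^n)$ and the higher-order estimates just obtained. Arzel\`a--Ascoli then produces a subsequence $\mu_k \to \infty$ along which $u_{\mu_k} \to U$ in $C^\infty_{\text{loc}}(\BR^n \times (0,\infty)) \cap C^0_{\text{loc}}(\BR^n \times [0,\infty))$. Passing to the limit in the equation, $U$ is a smooth $2$-convex solution of \eqref{LMCF0} on $(0,\infty)$, and the bound $|U(\cdot,t) - U_0| \leq (n\pi/2)\,t$ persists, giving the claimed $U(\cdot,t)\to U_0$ in $C^0_{\text{loc}}$ as $t \to 0$.

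For the self-expanding property I use the identity $u_{\lambda\mu}(x,t) = \lambda^{-2} u_\mu(\lambda x, \lambda^2 t)$, valid by construction for every $\lambda,\mu>0$. Passing to the limit along $\mu_k$ (and extracting a further subsequence if necessary so that $\lambda \mu_k$ also converges) gives $U(x,t) = \lambda^{-2} U(\lambda x, \lambda^2 t)$, and hence $U$ is a self-expander. The final step, upgrading subsequential convergence to convergence of the full family $u_\mu$, is where I expect the main obstacle: it reduces to the uniqueness of the $2$-convex self-expanding solution with initial datum $U_0$. My plan is to write the difference $w := U - V$ of any two subsequential limits as a solution of a linear uniformly parabolic equation on $\BR^n \times (0,T]$, obtained by integrating the linearization of the phase operator along the segment joining $D^2 U$ and $D^2 V$, with coefficients bounded in terms of $c$. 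Combined with $w(\cdot,0) = 0$ and the uniform linear-in-$t$ bound $|w| \leq n\pi\,t$, a maximum principle of Phragm\'en--Lindel\"of type for this equation on $\BR^n$ then forces $w \equiv 0$. This amounts to extending the uniqueness part of Theorem \ref{thm_main} from $C^2$ to merely continuous initial data, and is the main technical point of the argument.
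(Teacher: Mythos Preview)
Your approach matches the paper's: parabolic rescaling $u_\mu(x,t)=\mu^{-2}u(\mu x,\mu^2 t)$, the uniform estimates of Theorem~\ref{thm_main}, Arzel\`a--Ascoli, and the bound $|\partial_t u_\mu|\leq n\pi/2$ to control the initial trace. You are in fact more careful than the paper in one respect: you explicitly flag the passage from subsequential to full convergence, which the paper's own proof simply asserts from Arzel\`a--Ascoli.

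There is, however, a logical ordering gap in your self-expander paragraph. From the identity $u_{\lambda\mu_k}(x,t)=\lambda^{-2}u_{\mu_k}(\lambda x,\lambda^2 t)$ and a further subsequence along which $u_{\lambda\mu_k}$ converges, you can only conclude that \emph{some} subsequential limit $V$ satisfies $V(x,t)=\lambda^{-2}U(\lambda x,\lambda^2 t)$; you cannot yet conclude $V=U$. So the self-expander identity does not follow at that stage --- it requires precisely the uniqueness of subsequential limits that you propose only afterwards. Relatedly, the correct reduction is to uniqueness of \emph{any} bounded-Hessian solution with initial value $U_0$, not to uniqueness among self-expanders, since the subsequential limits are not known to be self-expanders a priori. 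The fix is simply to reorder: first run your linearization-plus-maximum-principle argument to show that any two subsequential limits coincide (working on $\BR^n\times[\epsilon,T]$ with $|w(\cdot,\epsilon)|\leq n\pi\,\epsilon$ and letting $\epsilon\to 0$ handles the fact that the linear equation only holds for $t>0$); full convergence of $u_\mu$ then follows, and the self-expander identity is immediate from $u_{\lambda\mu}\to U$ for every fixed $\lambda>0$.
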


The paper is organized as follows. In Section 2, we consider the geometry of a Lagrangian submanifold in terms of its potential function. In Section 3, we review some known results about Lagrangian mean curvature flows that are needed in the article. In Section 4, we derive two important evolution equations that play critical roles in the proof of long-time existence.  The long-time existence results are established in Section 5 and the convergence results in Section 6.

\begin{ack*}
     We thank anonymous referee for helpful comments, suggestions, and corrections.
\end{ack*}

\section{The Lagrangian Geometry in Potential}

Endow $\BR^{2n} = \BR^n\oplus\BR^n$ with the standard metric, symplectic form, and complex structure.
Given a function $u:\BR^n\to\BR$, the graph of its gradient is a Lagrangian submanifold in $\BR^{2n}$.  Denote it by $L_u = \{(x,Du(x)):x\in\BR^n\}$.

At any $x\in\BR^n$, one may find a orthonormal basis to diagonalize the Hessian of $u$, $D^2 u$.  Specifically, $D^2 u = \ld_i\dt_{ij}$ with respect to an orthonormal basis $\{a_i\}_{i=1,\ldots,n}$ for $\BR^n$.  It follows that the tangent space of $\Gm(Du)$ has the orthonormal basis
\begin{align}
    e_i &= \frac{1}{\sqrt{1+\ld_i^2}} \left( a_i + \ld_i J(a_i) \right)
\label{basis0} \end{align}
for $i=1,\ldots,n$.  In terms of the parametrization $x\in\BR^n\to(x,Du(x))\in\BR^n\oplus\BR^n$ (the so-called non-parametric form in the minimal graph theory), the induced metric has metric coefficients
\begin{align}
    g_{ij} &= (1+\ld_i^2)\dt_{ij} ~.
\label{metric0} \end{align}

We shall study two parallel tensors on $\BR^{2n}$.  The first tensor is the volume form of $\BR^n\oplus\{0\}\subset\BR^{2n}$.  It is an $n$-form on $\BR^{2n}$, and denote it by $\Om$.  The restriction of $\Om$ on $L_u$ is equivalent to the scalar-valued function $*\Om$, where $*$ is the Hodge star of the induced metric on $L_u$.  By using the frame \eqref{basis0},
\begin{align}
    *\Om &= \frac{1}{\sqrt{\prod_{i=1}^n(1+\ld_i^2)}} ~.
\label{starOm} \end{align}
It is clear that $*\Om$ takes value in $(0,1]$.

The second one is a $(0,2)$-tensor:
\begin{align}
    S(X,Y) &= \ip{J \pi_1(X)}{\pi_2(Y)} ~.
\label{Stensor} \end{align}
With respect to the frame \eqref{basis0}, the restriction of $S$ on $L_u$ is
\begin{align*}
    S_{ij} &= \frac{\ld_i}{1+\ld_i^2}\dt_{ij} ~.
\end{align*}
In particular, it is positive definite if and only if $u$ is convex.
The main interest of this paper is the $2$-positivity case.  Namely,
\begin{align}
    S_{ii}+S_{jj} &= \frac{(\ld_i+\ld_j)(1+\ld_i\ld_j)}{(1+\ld_i^2)(1+\ld_j^2)} > 0 ~.
\label{2convex0} \end{align}
for any $i\neq j$.  Note that $(\ld_i+\ld_j)(1+\ld_i\ld_j)>0$ does not correspond to a connected region in the $\ld_i\ld_j$-plane.

\begin{defn} \label{def_2convex}
    A $C^2$-function $u:\BR^n\to\BR$ is said to be \emph{$2$-convex} if the eigenvalues of its Hessian satisfy everywhere
    \begin{align*}
        \ld_i+\ld_j \geq 0 \quad\text{and}\quad 1+\ld_i\ld_j \geq 0
    \end{align*}
    for any $i\neq j$.  It is said to be \emph{strictly $2$-convex} if both inequalities are strict.
\end{defn}

As in \cite{CNS85}, we introduce a symmetric endomorphism on $\Ld^2TL_u$ to study the $2$-convexity of $u$.  It is denoted by $\ST$, and is given by
\begin{align*}
    \ST_{(ij)(k\ell)} &= S_{ik}\dt_{j\ell} + S_{j\ell}\dt_{ik} - S_{i\ell}\dt_{jk} - S_{jk}\dt_{i\ell}
\end{align*}
with respect to an orthonormal frame of $L_u$.  The $2$-convexity \eqref{2convex0} condition corresponds to the positivity of $\ST$, and thus it is useful to study the scalar valued function $\det\ST$.  In terms of \eqref{basis0},
\begin{align}
    \det\ST &= \prod_{i<j}(S_{ii} + S_{jj}) = \prod_{i<j}\frac{(\ld_i+\ld_j)(1+\ld_i\ld_j)}{(1+\ld_i^2)(1+\ld_j^2)} ~.
\label{detST} \end{align}
It is not hard to see that if $u$ is strictly $2$-convex, $\det\ST$ takes value within $(0,1]$.

When $\vep_1\leq*\Om\leq 1$ and $\vep_2\leq\det\ST\leq 1$, one can deduce that
\begin{align}
    \sum_i\ld_i^2 &\leq \vep_1^{-2} - 1 ~, \label{bound_slope} \\
    1+\ld_i\ld_j &\geq \frac{\vep_2}{\sqrt{2(\vep_1^{-2}-1)}} ~, \label{bound_2con1}\\
    \ld_i+\ld_j &\geq \frac{2\vep_2}{\vep_1^{-2}+1} \label{bound_2con2}
\end{align}
for any $i\neq j$ (under the strict $2$-convexity assumption); see \cite{TTW22l}*{section 2.3}.


\section{The Lagrangian Mean Curvature Flow}

Given any $u_0:\BR^n\to\BR$, it is known that the Lagrangian mean curvature flow (up to a tangential diffeomorphism) equation on $L_{u_0}$ reduces to \eqref{LMCF0}  for the potential function $u$.

The uniqueness of the solution to \eqref{LMCF0} was established by Chen and Pang in \cite{CP09}.
 
\begin{rmk} \label{rmk_dangle}
    Consider the function
    \begin{align*}
        f(B) &= \frac{1}{\ii} \log \frac{\det(\bfI + \sqrt{-1}B)}{\sqrt{\det(\bfI+B^2)}}
    \end{align*}
    on the space of symmetric matrices.  A direct computation shows that its derivative is
    \begin{align*}
        \dd f(B) = \tr ((g_B)^{-1}\dd B)
    \end{align*}
    where $g_B = \bfI + B^2$.
\end{rmk}


We recall some preliminary results about short-time existence and finite time singularity established in \cites{CCH12,CCY13}.

\begin{prop}[\cite{CCY13}*{Proposition 2.1}] \label{prop_short_time}
    Suppose that $u_0:\BR^n\to\BR$ is a smooth function with $\sup|D^\ell u_0|<\infty$ for any $\ell\geq 2$.  Then, \eqref{LMCF0} admits a smooth solution $u(x,t):\BR^n\times[0,T)\to\BR$ for some $T>0$.  Moreover, $\sup\{|D^\ell u(x,t)|:x\in\BR^n\}<\infty$ for any $\ell\geq 2$ and $t\in[0,T)$.
\end{prop}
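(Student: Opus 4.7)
The plan is to treat \eqref{LMCF0} as a uniformly parabolic fully nonlinear equation on $\BR^n$ and apply parabolic theory in bounded H\"older spaces.  By Remark \ref{rmk_dangle}, the linearization of the right-hand side at a symmetric matrix $B$ is $\tr(g_B^{-1}\,\cdot)$ with $g_B=\bfI+B^2$; when $B=D^2 u_0$, the hypothesis $\sup_{\BR^n}|D^2 u_0|<\infty$ forces the eigenvalues of $g_{D^2 u_0}$ into a compact subinterval of $(0,\infty)$, so the equation is uniformly parabolic at $t=0$ with coefficients bounded in every $C^k$ on $\BR^n$.

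The first step is short-time existence by a contraction mapping argument in the bounded parabolic H\"older space $X_T=C^{2+\af,(2+\af)/2}_b(\BR^n\times[0,T])$.  Writing $u=u_0+v$ with $v(\cdot,0)=0$, the equation rewrites as
\begin{align*}
    \mathcal{L} v := \pl_t v - a^{ij}(x)\,\pl_i\pl_j v = N(v) ~,\qquad a^{ij}(x):=(g_{D^2 u_0(x)})^{-1}_{ij} ~,
\end{align*}
where $N(v)$ absorbs the constant term $f(D^2 u_0(x))$ together with the quadratic remainder of the Taylor expansion of $f$ about $D^2 u_0$.  The linear operator $\mathcal{L}$ is uniformly parabolic with coefficients bounded in $C^{\af}_b(\BR^n)$, so parabolic Schauder theory for uniformly parabolic equations on $\BR^n$ in bounded H\"older spaces (see, e.g., Krylov or Lieberman) supplies a bounded inverse $\mathcal{L}^{-1}$.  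Since $N$ depends quadratically on $D^2 v$ with smooth coefficients, the map $v\mapsto\mathcal{L}^{-1}(N(v))$ contracts on a small ball in $X_T$ once $T$ is sufficiently small, yielding a solution $v\in X_T$, hence $u\in u_0+X_T$.

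To upgrade the solution to $C^\infty$ and to obtain the stated bounds on all higher derivatives, I would iterate the Schauder estimates.  Differentiating \eqref{LMCF0} once shows that each $\pl_k u$ satisfies the homogeneous linear uniformly parabolic equation
\begin{align*}
    \pl_t(\pl_k u) = (g_{D^2 u}^{-1})^{ij}\,\pl_i\pl_j(\pl_k u) ~,
\end{align*}
whose coefficients lie in $C^{\af}_b$ by the previous step; global parabolic Schauder thus gives $\pl_k u\in C^{2+\af,(2+\af)/2}_b$, i.e., $D^3 u\in C^{\af,\af/2}_b$.  Each further differentiation produces a linear parabolic equation for the next-order derivative whose lower order terms are polynomials in already-controlled derivatives of $u$ with smooth coefficients in $D^2 u$, and a straightforward bootstrap yields $\sup_{\BR^n}|D^\ell u(\cdot,t)|<\infty$ for every $\ell\geq 2$ and every $t\in[0,T)$.

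The main obstacle is the non-compactness of $\BR^n$: on a closed manifold standard theory for quasilinear parabolic equations applies immediately, while here one must work throughout in uniformly bounded norms.  This is exactly what the hypotheses $\sup|D^\ell u_0|<\infty$ accommodate, since the bounded H\"older spaces $C^{k+\af}_b(\BR^n)$ admit Schauder estimates with constants independent of the spatial domain whenever the equation is uniformly parabolic with uniformly bounded coefficients.  An alternative route, also feasible, is to solve Dirichlet problems on enlarging balls $B_R\subset\BR^n$ with boundary datum $u_0$, derive a common lifespan $T>0$ and uniform interior $C^\infty$ estimates from uniform parabolicity, and extract an $\BR^n$-solution by an Arzel\`a--Ascoli diagonal argument.
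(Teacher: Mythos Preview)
The paper does not supply its own proof of this proposition: it is quoted verbatim from \cite{CCY13}*{Proposition 2.1} and used as a black box.  Your outline is the standard route to short-time existence for uniformly parabolic fully nonlinear equations on $\BR^n$ and is correct in substance; it is also consistent with how \cite{CCY13} proceeds (they invoke the same linearization, uniform parabolicity from the Hessian bound, and bootstrap).  One small remark: in your bootstrap you should keep track of the fact that each differentiation after the first produces lower-order terms involving derivatives of $g_{D^2u}^{-1}$, hence polynomial expressions in $D^2u,\ldots,D^{\ell}u$; you say this, but it is worth noting explicitly that these remain in $C^{\af}_b$ only because of the inductive hypothesis, so the induction must be set up carefully on $\ell$.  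Otherwise there is nothing to compare, since the present paper offers no argument of its own here.
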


\begin{lem}[\cite{CCH12}*{Lemma 4.2}] \label{lem_finite_sing}
    Let $u$ be a smooth solution to \eqref{LMCF0} on $\BR^n\times[0,T)$ for some $T>0$.  Suppose that
    \begin{itemize}
        \item $\sup\{|D^\ell u(x,t)|:x\in\BR^n\}<\infty$ for any $\ell\geq 2$ and $t\in[0,T)$;
        \item $D^2u$ and $D^3 u$ are uniformly bounded on $\BR^n\times[0,T)$.  Namely, there exist $c_2, c_3>0$ such that $|D^2u(x,t)|\leq c_2$ and $|D^3u(x,t)|\leq c_3$ on $\BR^n\times[0,T)$.
    \end{itemize}
    Then, there exists $c_\ell = c_\ell(c_2,c_3) > 0$ for any $\ell\geq 4$ such that $|D^\ell u(x,t)|\leq c_\ell$ on $\BR^n\times[0,T)$.
\end{lem}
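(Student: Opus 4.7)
My plan is a standard bootstrap for higher derivatives of $u$, based on identifying the linear parabolic structure satisfied by each $D^\ell u$ and then feeding it into parabolic Krylov--Safonov and Schauder estimates. By Remark~\ref{rmk_dangle}, the linearization of $f(B) = \frac{1}{\ii}\log\frac{\det(\bfI + \ii B)}{\sqrt{\det(\bfI + B^2)}}$ at $B = D^2 u$ is $dB \mapsto \tr((\bfI + (D^2u)^2)^{-1}\,dB)$. Writing $g^{ij} := ((\bfI + (D^2u)^2)^{-1})^{ij}$ and differentiating \eqref{LMCF0} once in $x^k$ yields $(\pl_t - g^{ij}\pl_i\pl_j)(\pl_k u) = 0$, and differentiating once more produces
\[
(\pl_t - g^{ij}\pl_i\pl_j)(\pl_k\pl_\ell u) = f''(D^2u)[D^3u, D^3u]_{k\ell}.
\]
Inductively, $D^\ell u$ satisfies $(\pl_t - g^{ij}\pl_i\pl_j)(D^\ell u) = Q_\ell$, where $Q_\ell$ is a universal polynomial in $D^2u,\dots,D^{\ell+1}u$.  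Under the hypotheses $|D^2u|\leq c_2$ and $|D^3u|\leq c_3$, the matrix $g^{ij}$ is uniformly elliptic with eigenvalues in $[(1+c_2^2)^{-1},1]$, globally Lipschitz in $x$ (since $|\pl_k g^{ij}|\leq C(c_2)\,c_3$), and the right-hand side of the $D^2u$-equation is bounded by $C(c_2)\,c_3^2$.

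The first nontrivial step is a H\"older estimate on $w := D^2u$ itself.  Since $w$ solves a linear uniformly parabolic equation with bounded measurable coefficients and bounded source, the parabolic Krylov--Safonov Harnack/$C^\alpha$ theorem yields, on any parabolic cylinder $B_r(x_0)\times(t_0-r^2,t_0]\subset\BR^n\times(0,T)$ with $t_0\geq r^2$,
\[
\|D^2 u\|_{C^{\alpha/2,\alpha}(B_{r/2}(x_0)\times(t_0-r^2/4,\,t_0])} \leq C(c_2,c_3,n)
\]
uniformly in $(x_0,t_0)$, for some $\alpha\in(0,1)$.  Near the initial slice $t=0$, the first hypothesis together with the smoothness of $u$ on $\BR^n\times[0,T)$ supplies matching control.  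Once $D^2u\in C^{\alpha/2,\alpha}$, the coefficients $g^{ij}$ inherit the same regularity.

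With this foothold I iterate interior parabolic Schauder estimates.  Applied to $(\pl_t - g^{ij}\pl_i\pl_j)(Du) = 0$ with H\"older coefficients, Schauder gives $Du\in C^{1+\alpha/2,\,2+\alpha}$, hence $D^3u\in C^{\alpha/2,\alpha}$.  The source $f''(D^2u)[D^3u,D^3u]$ of the $D^2u$-equation is then H\"older, so Schauder upgrades $D^2u$ to $C^{1+\alpha/2,\,2+\alpha}$, giving $D^4u\in C^{\alpha/2,\alpha}$.  Inductively, once $D^ku\in C^{\alpha/2,\alpha}$ for every $k\leq\ell$, the coefficients and source $Q_{\ell-1}$ of the $D^{\ell-1}u$-equation lie in $C^{\alpha/2,\alpha}$, so parabolic Schauder produces $D^{\ell-1}u\in C^{1+\alpha/2,\,2+\alpha}$, and hence $D^{\ell+1}u\in C^{\alpha/2,\alpha}$, with constants depending only on $c_2$, $c_3$, $n$ and $\ell$.

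The crux is the initial H\"older step: the coefficients of the $D^2u$-equation are only bounded (not a priori H\"older in time) and its source is only bounded, so classical Schauder does not apply and Krylov--Safonov is essential.  Once any $C^\alpha$ control on $D^2u$ is in hand, the Schauder ladder is routine.  A subsidiary combinatorial point is that each $Q_\ell$ depends polynomially on derivatives of $u$ of order at most $\ell+1$; this follows by induction on $\ell$ from the chain rule applied to the analytic nonlinearity $f$, using $\pl_k g^{ij} = f''(D^2u)[\pl_k D^2u]$ and its iterates.
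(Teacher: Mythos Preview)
The paper does not supply its own proof of this lemma; it is quoted verbatim from \cite{CCH12}*{Lemma 4.2} and used as a black box. Your proposal is the standard argument and is essentially the one carried out in \cite{CCH12}: differentiate \eqref{LMCF0} to obtain linear non-divergence parabolic equations for $D^\ell u$ with coefficients $g^{ij} = ((\bfI + (D^2u)^2)^{-1})^{ij}$, use the uniform bound on $D^2u$ for uniform ellipticity, invoke Krylov--Safonov to get a $C^{\alpha/2,\alpha}$ estimate on $D^2u$ (since its right-hand side is bounded by the $D^3u$ hypothesis), and then climb the Schauder ladder. Your identification of the structure of $Q_\ell$ and of the role of Krylov--Safonov as the non-trivial first step is correct.

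One comment on what you wrote. The sentence ``near the initial slice $t=0$, the first hypothesis together with the smoothness of $u$ \ldots\ supplies matching control'' deserves a little more care. The interior parabolic estimates you invoke give bounds depending only on $c_2,c_3,n$ on cylinders $B_r(x_0)\times(t_0-r^2,t_0]$ with $t_0\geq r^2$; for $t$ close to $0$ you must either (a) use the first bullet of the hypotheses as initial data in a global-in-space Schauder estimate on $\BR^n\times[0,1]$, in which case the resulting constant also depends on $\sup_x|D^\ell u_0|$, or (b) observe that every application of this lemma in the present paper (Corollary~\ref{cor_sing} and Step~2 of Proposition~\ref{prop_longt_smooth}) only needs the conclusion on $[\ep,T)$ for arbitrary $\ep>0$, where the interior estimate suffices. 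Either reading is consistent with \cite{CCH12} and with how the lemma is used here; it is worth being explicit about which one you mean.
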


\begin{cor} \label{cor_sing}
    Suppose that $u_0:\BR^n\to\BR$ is a smooth function with $\sup|D^\ell u_0|<\infty$ for any $\ell\geq 2$.  Let $u(x,t):\BR^n\times[0,T)\to\BR$ be the solution to \eqref{LMCF0} given by Proposition \ref{prop_short_time}, where $T$ is the maximal existence time.  If $T<\infty$ and $|D^2u(x,t)|\leq c_2$ for some $c_2>0$ on $\BR^n\times[0,T)$, then
    \begin{align*}
        \lim_{t\to T} \sup\{|D^3u(x,t')|: x\in\BR^n ,~ t'\leq t\} &=\infty ~.
    \end{align*}
\end{cor}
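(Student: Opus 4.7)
The plan is to argue by contradiction, reducing the corollary to Lemma \ref{lem_finite_sing} together with short-time existence (Proposition \ref{prop_short_time}) and the uniqueness result of Chen--Pang. Since the quantity $\sup\{|D^3u(x,t')|:x\in\BR^n,\,t'\leq t\}$ is monotone nondecreasing in $t$, the failure of the limit to be $+\infty$ is equivalent to the existence of a constant $c_3>0$ such that $|D^3u(x,t)|\leq c_3$ on $\BR^n\times[0,T)$.

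Assuming such a $c_3$ exists, I would first invoke Lemma \ref{lem_finite_sing} with the given bound $|D^2u|\leq c_2$ to conclude that, for every $\ell\geq 2$, there is a constant $c_\ell>0$ with $|D^\ell u(x,t)|\leq c_\ell$ on $\BR^n\times[0,T)$. Together with Remark \ref{rmk_dangle}, which shows that $\pl_t u = f(D^2u)$ with $f$ smooth in a neighborhood of the compact set $\{|B|\leq c_2\}$, this produces a uniform bound on $\pl_t u$, hence $u$ is uniformly Lipschitz in $t$ on $\BR^n\times[0,T)$. Consequently $u(x,t)$ extends continuously to $t=T$. Differentiating the PDE and iterating, each $D^\ell u$ satisfies a parabolic equation whose coefficients are smooth in the already-bounded quantities $D^2u,\ldots,D^{\ell+2}u$, so the time derivatives $\pl_t D^\ell u$ are likewise uniformly bounded; thus the limiting function $u(\cdot,T)$ is smooth with $\sup_{\BR^n}|D^\ell u(\cdot,T)|<\infty$ for every $\ell\geq 2$.

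Now I would apply Proposition \ref{prop_short_time} with initial datum $u(\cdot,T)$ to produce a smooth solution on $\BR^n\times[T,T+\vep)$ for some $\vep>0$. By the uniqueness result of Chen--Pang cited in Section 3, this extension agrees with the original solution in the overlap, so concatenation yields a smooth solution on $\BR^n\times[0,T+\vep)$, contradicting the assumed maximality of $T$.

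The main delicate point is extracting a genuinely smooth initial datum $u(\cdot,T)$ with bounded high-order derivatives from the slab estimates, since only $D^\ell u$ for $\ell\geq 2$ are controlled (the potential $u$ and the slope $Du$ may grow). However, the quantities required to apply Proposition \ref{prop_short_time} are precisely $\sup|D^\ell u(\cdot,T)|$ for $\ell\geq 2$, so this mismatch is not an actual obstacle; the linear-in-$|x|$ growth of $Du$ and the boundedness of $\pl_t u$ are enough to make $u(\cdot,T)$ a well-defined $C^2$ function. Once this step is executed carefully, the rest of the argument is a standard maximal-existence-time contradiction.
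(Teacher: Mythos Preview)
Your proposal is correct and follows the same overall strategy as the paper: contradict maximality of $T$ by combining Lemma~\ref{lem_finite_sing} with the short-time existence of Proposition~\ref{prop_short_time}. The only difference is a technical shortcut: rather than passing to the limit $t\to T$ and verifying that $u(\cdot,T)$ is smooth with bounded higher derivatives, the paper restarts the flow from the sequence of initial data $\td u_k(x)=u(x,(1-2^{-k})T)$, observes that the uniform bounds on $|D^\ell \td u_k|$ yield a \emph{uniform} short-time interval $[0,T')$ from Proposition~\ref{prop_short_time}, and then for $k$ large (so that $2^{-k}T<T'$) obtains an extension past $T$ directly. This avoids the limit-passage step you outline, but both arguments are standard and equally valid.
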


\begin{proof}
Suppose that the above limit is finite.  Due to Lemma \ref{lem_finite_sing}, all the higher order ($\ell\geq 2$) derivatives of $u$ are uniformly bounded on $\BR^n\times[0,T)$.  In particular, Proposition \ref{prop_short_time} applies to the family of initial data $\td{u}_k(x) = u(x,(1-2^{-k})T)$, and there is a $T'>0$ such that the solution starting from $\td{u}_k$ exists on the time interval $[0,T')$.  It follows that the solution from $u_0$ can be extended over time $T$, which contradicts to $T<\infty$.
\end{proof}


We will also need the Liouville theorem and a priori estimate established by Nguyen and Yuan in \cite{NY11}.  Denote $\BR^n\times(-\infty,0]$ by $Q_\infty$.  For any $r>0$, denote $B_r(0)\times[-r^2,0]\subset Q_\infty$ by $Q_r$.

\begin{prop}[\cite{NY11}*{Proposition\footnote{Specifically, this is the first step in that proposition, which uses on the Krylov--Safonov H\"older inequality.} 2.1}] \label{prop_Liouville}
    Let $u$ be a smooth solution to \eqref{LMCF0} in $Q_\infty$.  Suppose that $D^2u$ is uniformly bounded over $Q_\infty$.  Then, $u$ is stationary.  In other words, the right hand side of \eqref{LMCF0} vanishes, and $L_u$ is a static special Lagrangian submanifold.
\end{prop}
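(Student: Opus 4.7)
I would prove that the Lagrangian angle
\[
\theta(x,t) := \sum_{i=1}^n \arctan \lambda_i(x,t),
\]
which coincides with the right-hand side of \eqref{LMCF0} (and hence with $\partial u/\partial t$), is constant on $Q_\infty$.  Constancy of $\theta$ is equivalent to $\vec H = J\nabla_{L_u}\theta \equiv 0$, i.e.\ each $L_{u(\cdot,t)}$ is a special Lagrangian and $L_u$ does not move under MCF, which is exactly the claimed stationarity.

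\textbf{Step 1 (uniform parabolicity and interior smooth bounds).}  From $|D^2 u|\leq M$ on $Q_\infty$ one has $|\lambda_i|\leq \sqrt M$, hence $|\theta|\leq n\pi/2$; by \eqref{metric0} the induced metric satisfies $1\leq g_{ii}\leq 1+M$ in a diagonalizing frame, and the linearization $dF = g_B^{-1}$ from Remark \ref{rmk_dangle} is therefore uniformly elliptic.  Thus \eqref{LMCF0} is uniformly parabolic.  Viewing it as $u_t = F(D^2 u)$ with smooth $F$, interior $C^{2,\alpha}$ estimates for this fully nonlinear equation followed by Schauder bootstrap produce interior bounds on $|D^\ell u|$ on every parabolic cylinder $Q_r \subset Q_\infty$ depending only on $M$, $\ell$, and $r$.

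\textbf{Step 2 (heat equation for $\theta$ and parabolic Liouville).}  A standard computation under LMCF yields
\[
\bigl(\partial_t - \Delta_{g_t}\bigr)\theta = 0,
\]
where $\Delta_{g_t}$ is the Laplace--Beltrami operator on $L_{u(\cdot,t)}$.  In the $x$-parametrization this is a linear parabolic equation for $\theta$ whose principal symbol $g^{ij}$ is uniformly comparable to the identity by Step 1 and whose lower-order (Christoffel) coefficients are smooth with bounds on each $Q_r$ by the interior estimates of Step 1.  Hence $\theta$ is a bounded ancient solution on $\BR^n \times (-\infty,0]$ to a linear uniformly parabolic equation.  Applying the parabolic Harnack inequality to the nonnegative functions $\theta - \inf_{Q_R}\theta$ and $\sup_{Q_R}\theta - \theta$ on cylinders of radius $R\to\infty$ forces the oscillation of $\theta$ to vanish, so $\theta \equiv \theta_0$.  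The identity $\vec H = J\nabla_{L_u}\theta = 0$ then shows that $L_u$ is a static special Lagrangian submanifold.

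\textbf{Main obstacle.}  The delicate ingredient is the interior bootstrap in Step 1: the special Lagrangian operator $F(B) = \sum \arctan \lambda_i(B)$ is neither convex nor concave on the full eigenvalue space, so the classical Evans--Krylov theorem does not apply verbatim.  One must invoke instead the interior $C^{2,\alpha}$ theory tailored to the Lagrangian phase equation, which is precisely the content of \cite{NY11}.  Once this interior regularity is in place, Step 2 is essentially routine: it uses only that $\theta$ satisfies a linear uniformly parabolic equation on all of $\BR^n$ at all past times, and that $\theta$ is bounded by the Hessian bound alone.
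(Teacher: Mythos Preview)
The paper does not supply its own proof of this proposition; it is quoted from \cite{NY11}*{Proposition 2.1}.  Your outline is essentially the standard argument and is correct.

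That said, Step 1 and the ``main obstacle'' you flag are unnecessary detours.  Working in the non-parametric $x$-coordinates (the setting of \eqref{LMCF0}), differentiate $u_t = F(D^2u)$ in $t$ and use Remark \ref{rmk_dangle} to obtain directly
\[
\partial_t\theta \;=\; g^{ij}\,\partial_i\partial_j\theta,
\]
a non-divergence linear equation with \emph{no} first-order terms; there are no Christoffel symbols to control.  The coefficients $g^{ij} = [(\bfI + (D^2u)^2)^{-1}]_{ij}$ are uniformly elliptic and bounded purely from the hypothesis $|D^2u|\leq M$.  The Krylov--Safonov parabolic Harnack inequality needs only bounded measurable coefficients, so no interior $C^{2,\alpha}$ estimate on $u$ is required, and you never have to confront the non-convexity of $F$ or invoke Theorem \ref{thm_apriori} (which in any case carries the extra hypothesis $1+\ld_i\ld_j\geq 0$ not assumed in this proposition).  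Your oscillation argument in Step 2 then forces $\theta$ to be constant, and $H = J\nabla\theta = 0$ follows.
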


\begin{thm}[\cite{NY11}*{Theorem 1.1}] \label{thm_apriori}
    There exists a constant $c>0$ with the following significance.  Let $u$ be a smooth solution to \eqref{LMCF0} in $Q_1$.  Suppose that $1+\ld_i\ld_j\geq0$ for any $i\neq j$ everywhere in $Q_1$.  Then,
    \begin{align}
        [\pl_t u]_{1,\oh;Q_\oh} + [D^2 u]_{1,\oh;Q_\oh} &\leq c\,||D^2u||_{L^\infty(Q_1)} ~.
    \end{align}
\end{thm}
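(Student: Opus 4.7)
The goal is a parabolic $C^{1,\oh}$ estimate on $\pl_t u$ and $D^2 u$ in $Q_\oh$ for a solution to \eqref{LMCF0} in $Q_1$, given the uniform bound $M=\|D^2 u\|_{L^\infty(Q_1)}$ and the pairwise semiconvexity $1+\ld_i\ld_j\geq 0$. The plan is to normalize the range of the Hessian by a symplectic rotation of $\BR^{2n}$, then invoke an Evans--Krylov-type $C^{2,\af_0}$ estimate for the rotated equation, and finally bootstrap via parabolic Schauder to the sharp exponent $\oh$.

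By Remark~\ref{rmk_dangle}, the linearization of \eqref{LMCF0} has coefficient matrix $(\bfI+(D^2u)^2)^{-1}$, whose eigenvalues lie in $[(1+M^2)^{-1},1]$, so the flow is uniformly parabolic with constants depending only on $M$. The condition $1+\ld_i\ld_j\geq 0$ admits the trigonometric reading $\cos(\ta_i-\ta_j)\geq 0$ with $\ta_i=\arctan\ld_i$, which means that all the angles $\ta_i$ lie in a common interval of length at most $\pi/2$. Applying a uniform rotation of $\BR^{2n}=\BR^n\oplus\BR^n$ by a common angle $\ta_0$ (acting identically on each symplectic factor) normalizes this interval to $[-\pi/4,\pi/4]$: in the rotated coordinates the Lagrangian is still a gradient graph $L_{\br u}$, with Hessian eigenvalues $\br\ld_i=(\ld_i-\tan\ta_0)/(1+\ld_i\tan\ta_0)$ satisfying $|\br\ld_i|\leq 1$, and $\br u$ solves the same equation \eqref{LMCF0} up to an additive constant phase.

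In the normalized regime $|\br\ld_i|\leq 1$, Yuan's convex-level-set theorem for the special Lagrangian operator $F(B)=\sum_i\arctan\mu_i(B)$ applies, so the Caffarelli--Yuan extension of Evans--Krylov yields $[D^2\br u]_{\af_0;Q_{3/4}}+[\pl_t\br u]_{\af_0;Q_{3/4}}\leq C M$ for some $\af_0\in(0,1)$ depending only on $M$ and $n$; transporting back via the inverse rotation gives the same bound for $u$. To reach the parabolic exponent $\oh$ claimed in the theorem, differentiate the equation in a spatial direction and apply parabolic Schauder theory: with $D^2 u\in C^{\af_0}$, the coefficients $(\bfI+(D^2u)^2)^{-1}$ are $C^{\af_0}$, and since $F$ is real analytic on $\{|B|\leq M\}$, the standard linear Schauder iteration raises the Hölder exponent up to any value less than $1$, and in particular to $\oh$ on $Q_\oh$.

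The main obstacle is making the normalization step rigorous: one needs that the rotation by $\ta_0$ produces a \emph{globally} graphical Lagrangian on $Q_1$ and that, in the normalized picture, the pair condition $1+\br\ld_i\br\ld_j\geq 0$ together with $|\br\ld_i|\leq 1$ really does fall within Yuan's convex-level-set regime rather than merely a weaker Krylov--Safonov $C^{\alpha}$ regime. If the rotated tangent plane becomes vertical somewhere (which the hypothesis on its own does not exclude), the rotation fails to yield a single gradient graph, and one must instead cover $Q_1$ by local rotated charts and glue the resulting Hölder estimates. Tracking the dependence of the Evans--Krylov constant through this gluing, while preserving the convex-level-set input in each chart, is the heaviest technical burden of the argument.
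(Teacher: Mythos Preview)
The paper does not supply its own proof of this statement: Theorem~\ref{thm_apriori} is simply quoted from \cite{NY11}*{Theorem~1.1} and used as a black box in \S5.  So there is no in-paper argument to compare your proposal against.

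That said, your sketch is recognizably the strategy of \cite{NY11}: read $1+\ld_i\ld_j\geq 0$ as $|\ta_i-\ta_j|\leq\pi/2$ with $\ta_i=\arctan\ld_i$, rotate the Lagrangian by a common angle to normalize the Hessian, and then invoke an Evans--Krylov type estimate followed by Schauder bootstrap.  The point you flag as the ``main obstacle'' --- that a single rotation angle might not serve on all of $Q_1$ --- is genuine, but your proposed resolution (covering by local rotated charts and gluing the H\"older estimates) is not how \cite{NY11} proceeds.  There one first notes that the phase $\Theta=\pl_t u=\sum_i\ta_i$ satisfies a uniformly parabolic linear equation with coefficient matrix $(\bfI+(D^2u)^2)^{-1}$, so Krylov--Safonov already gives a H\"older estimate $[\Theta]_{\af_0;Q_{3/4}}\leq C(n,M)$.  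On a sufficiently small parabolic subcylinder the oscillation of $\Theta$ is then tiny, and combined with the pointwise bound $|\ta_i-\Theta/n|\leq(n-1)\pi/(2n)$ coming from $|\ta_i-\ta_j|\leq\pi/2$, all the $\ta_i$'s throughout that subcylinder are trapped in a single fixed interval.  A single rotation then puts the rotated potential into the regime where the operator is concave (so Evans--Krylov applies directly), and a rescaling transfers the resulting local $C^{2,\af}$ estimate back to $Q_{1/2}$.  In particular, once the rotation angle is chosen this way the rotated graph is never vertical, so that part of your worry dissolves; the substantive missing ingredient in your outline is the preliminary H\"older estimate on $\Theta$, which is what makes one rotation suffice on each small cylinder and removes the need for any gluing.
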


The notation on the left hand side is the semi-norm:
\begin{align*}
    [f]_{1,\oh;Q_r} &= \sup\left\{ \frac{f(x',t') - f(x,t)}{\max\{|x'-x|,|t'-t|^\oh\}} : (x',t'),(x,t)\in Q_r \,\text{ and }\, (x',t')\neq(x,t)\right\} ~.
\end{align*}
In particular, Theorem \ref{thm_apriori} implies that $||D^3 u||_{L^\infty(Q_\oh)} \leq c\,||D^2u||_{L^\infty(Q_1)}$.


\section{Two Evolution Equations}

We derive the evolution equations for $\log(*\Om)$ and $\log\det\ST$, defined in \eqref{starOm} and \eqref{detST}, respectively.  They will be considered in the parametric form: the parametrization given by $\frac{\pl}{\pl t}F = H$.  That is to say, suppose that $u:\BR^n\times[0,T)$ is a solution to \eqref{LMCF0}, then $F$ is $(x,Du(x,t))$ composing with a time-dependent diffeomorphism.

\begin{prop} \label{prop_starOm}
    Let $u:\BR^n\times[0,T)$ be a solution to \eqref{LMCF0}.  Suppose that $u$ is $2$-convex for all $t\in[0,T)$.  Then. $\log(*\Om)$ satisfies
    \begin{align}
        (\heat)\log(*\Om) &\geq \frac{1}{n}\left|\nabla\log(*\Om)\right|^2
    \end{align}
    in the parametric form of the mean curvature flow.
\end{prop}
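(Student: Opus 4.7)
The plan is to work in the diagonalizing frame of \eqref{basis0} at a fixed point $p \in L_t$. Choose an orthonormal basis $\{a_i\}$ of $\BR^n$ at $p$ so that $D^2u = \mathrm{diag}(\ld_i)$, set $\af_i = \arctan\ld_i$, and take the tangent frame $e_i = \cos\af_i\, a_i + \sin\af_i\, Ja_i$ with normal frame $\nu_i = Je_i$. In this frame the second fundamental form components $h_{ijk} = \ip{\nabla_{e_i}e_j}{\nu_k}$ are totally symmetric in $(i,j,k)$ (Lagrangian condition in a flat ambient). By extending the eigenvectors smoothly to a neighborhood of $p$ (and arguing by continuity when eigenvalues coincide), one verifies the identity $\nabla_k \af_i|_p = h_{iik}$. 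Since $\log(*\Om) = \sum_i \log\cos\af_i$, this yields
$$
\nabla_k \log(*\Om)\big|_p = -\sum_i \ld_i\, h_{iik},
\qquad
|\nabla\log(*\Om)|^2\big|_p = \sum_k\Bigl(\sum_i \ld_i\, h_{iik}\Bigr)^2.
$$

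Next I would derive an explicit expression for $\heat \log(*\Om)$. Because the operator $\heat$ is invariant under tangential reparametrization, one may work in the non-parametric gauge $u_t = \ta$. Differentiating $\log(*\Om) = -\tfrac12\log\det(\bfI+(D^2u)^2)$ by a computation analogous to Remark \ref{rmk_dangle} gives $\pl_t\log(*\Om) = -\tr(S\,D^2\ta)$; expanding $D^2\ta$ via $d\ta = \tr((\bfI+(D^2u)^2)^{-1}\,dD^2u)$ and writing $\Dt \log(*\Om) = \sum_k \nabla_k\nabla_k \log(*\Om)$ from the gradient formula above leads to the cancellation of all fourth-order derivatives of $u$. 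This cancellation is driven by the Codazzi identity $\nabla_\ell h_{iik} = \nabla_i h_{i\ell k}$ (valid in a flat ambient) together with $H^k = \sum_i h_{iik} = \nabla_k \ta$. What remains is a pointwise quadratic expression in the second fundamental form,
$$
\heat \log(*\Om) = \sum_{i,j,k} \CC_{ijk}(\ld)\, h_{ijk}^2,
$$
for symmetric coefficients $\CC_{ijk}(\ld)$ that are explicit rational functions of the $\ld$'s.

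The proposition then reduces to the pointwise algebraic inequality $\sum_{i,j,k} \CC_{ijk}(\ld)h_{ijk}^2 \geq \frac{1}{n}\sum_k(\sum_i\ld_i h_{iik})^2$. My strategy is to establish, under $2$-convexity, the diagonal bound $\CC_{iik}(\ld) \geq \ld_i^2$ together with the off-diagonal non-negativity $\CC_{ijk}(\ld) \geq 0$ for $i \neq j$; given these, one discards the non-negative cross terms and applies Cauchy--Schwarz $(\sum_i\ld_i h_{iik})^2 \leq n\sum_i \ld_i^2 h_{iik}^2$ before summing in $k$ to finish. The main obstacle is verifying these sign conditions on the $\CC_{ijk}$. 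The off-diagonal coefficients ($i \neq j$) will naturally carry factors like
$$
S_{ii}+S_{jj} = \frac{(\ld_i+\ld_j)(1+\ld_i\ld_j)}{(1+\ld_i^2)(1+\ld_j^2)},
$$
inherited from the variation of $S$ in the time derivative, and the two inequalities of Definition \ref{def_2convex}, $\ld_i+\ld_j \geq 0$ and $1+\ld_i\ld_j \geq 0$, are used together in precisely the right way to secure this non-negativity---absent either of them, the corresponding coefficient can flip sign. The diagonal bound $\CC_{iik} \geq \ld_i^2$ is expected to follow from a direct algebraic reduction of the explicit form of $\CC_{iik}$ obtained in step two.
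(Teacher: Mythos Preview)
Your overall strategy coincides with the paper's: compute $\nabla_k\log(*\Om)=-\sum_i\ld_i h_{iik}$ in the diagonalizing frame, obtain a pointwise quadratic expression $\heat\log(*\Om)=\sum\CC_{ijk}(\ld)h_{ijk}^2$, and absorb $\tfrac1n|\nabla\log(*\Om)|^2$ by Cauchy--Schwarz together with the diagonal bound $\CC_{iik}\geq\ld_i^2$. The paper does not rederive the coefficients but quotes \cite{TTW22l}*{Proposition 2.2}:
\[
\heat\log(*\Om)\geq\sum_i(1+\ld_i^2)h_{iii}^2+\sum_{i\neq j}(3+\ld_i^2+2\ld_i\ld_j)h_{iij}^2+\sum_{i<j<k}\bigl(6+2\ld_i\ld_j+2\ld_j\ld_k+2\ld_k\ld_i\bigr)h_{ijk}^2,
\]
from which the required bounds $1+\ld_i^2\geq\ld_i^2$, $3+\ld_i^2+2\ld_i\ld_j\geq\ld_i^2$, and $6+2\sum\ld_p\ld_q\geq0$ are immediate.

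There is, however, a misconception in your final paragraph. You predict that the off-diagonal coefficients will carry a factor $S_{ii}+S_{jj}=\frac{(\ld_i+\ld_j)(1+\ld_i\ld_j)}{(1+\ld_i^2)(1+\ld_j^2)}$ ``inherited from the variation of $S$,'' and that both inequalities $\ld_i+\ld_j\geq0$ and $1+\ld_i\ld_j\geq0$ are needed to secure the sign. That mechanism belongs to Proposition~\ref{prop_detST} for $\log\det\ST$, not here: $*\Om$ does not see $S$, its evolution coefficients are \emph{polynomial} in the $\ld_i$ (no rational $S$-type factors appear), and the sign check uses only $1+\ld_i\ld_j\geq0$ (giving $3+2\ld_i\ld_j\geq1$ and $6+2\sum\ld_p\ld_q\geq0$). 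The hypothesis $\ld_i+\ld_j\geq0$ plays no role in this proposition. If you carry your own computation of the $\CC_{ijk}$ to the end you will discover this, so the gap is not fatal, but your heuristic for \emph{why} the inequality holds is pointing at the wrong structure.
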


\begin{proof}
By \cite{TW02}*{2nd equation on P.532},
\begin{align*}
    \nabla_k(*\Om) &= -(*\Om)(\sum_{i}\ld_i h_{iik})
\end{align*}
with respect to the orthonormal frame \eqref{basis0}.
By the Cauchy--Schwarz inequality,
\begin{align*}
    |\nabla\log(*\Om)|^2 &= \sum_{k} \left|\sum_{i}\ld_ih_{kii}\right|^2 \\
    &\leq n\sum_{i,k}\ld_i^2h_{kii}^2 = n\left[ \sum_{i}\ld_i^2h_{iii}^2 + \sum_{i\neq j}\ld_i^2h_{iij}^2 \right] ~.
\end{align*}
According to \cite{TTW22l}*{Proposition 2.2},
\begin{align*}
    &\quad (\heat)\log(*\Om) \\
    &\geq \sum_{i}(1+\ld_i^2)h_{iii}^2 + \sum_{i\neq j}(3 + \ld_i^2 + 2\ld_i\ld_j)h_{iij}^2 + \sum_{i<j<k}(6+2\ld_i\ld_j+2\ld_j\ld_k+2\ld_k\ld_i)h_{ijk}^2 \\
    &\geq \sum_{i}h_{iii}^2 + \sum_{i\neq j}(3 + 2\ld_i\ld_j)h_{iij}^2 + \sum_{i<j<k}(6+2\ld_i\ld_j+2\ld_j\ld_k+2\ld_k\ld_i)h_{ijk}^2 + \frac{1}{n}\left|\nabla\log(*\Om)\right|^2 ~.
\end{align*}
Since $1+\ld_i\ld_j\geq0$, it finishes the proof of this proposition.
\end{proof}

\begin{prop} \label{prop_detST}
    Let $u:\BR^n\times[0,T)$ be a solution to \eqref{LMCF0}.  Suppose that $u$ is strictly $2$-convex for all $t\in[0,T)$.  Then. $\log\det\ST$ satisfies
    \begin{align}
        (\heat)\log\det\ST &\geq 2|A|^2 + \frac{1}{n(n-1)}\left|\nabla\log\det\ST\right|^2
    \end{align}
    in the parametric form of the mean curvature flow.
\end{prop}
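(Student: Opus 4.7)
The plan is to mirror the proof of Proposition \ref{prop_starOm}. I work in the parametric MCF and, at each point $p$, choose the orthonormal frame \eqref{basis0} diagonalizing $D^2 u$. Under strict $2$-convexity each factor $S_{ii}+S_{jj}$ in \eqref{detST} is positive, so $\log\det\ST = \sum_{i<j}\log(S_{ii}+S_{jj})$ is a smooth scalar and the computation of its evolution reduces to a pointwise calculation in this frame.

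The first step is to compute $|\nabla\log\det\ST|^2$ in this frame. Using $S_{ii} = \ld_i/(1+\ld_i^2)$ together with the fact that $\nabla_k\ld_i$ in an adapted frame is controlled by the diagonal components $h_{iik}$ of the second fundamental form, I would express each $\nabla_k(S_{ii}+S_{jj})$ as a linear combination of $h_{iik}$ and $h_{jjk}$. Summing over $i<j$ and then over $k$, the Cauchy--Schwarz inequality gives an upper bound on $|\nabla\log\det\ST|^2$ by a quadratic form in $\{h_{iik}\}$; the factor $n(n-1)$ in the target inequality is naturally the number of ordered pairs $(i,j)$ with $i\neq j$ that enter the expansion. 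The second step is to invoke the Simons-type pointwise evolution identity for $\log\det\ST$ established in \cite{TTW22l}, the counterpart of the $\log(*\Om)$ identity (\cite{TTW22l}*{Proposition 2.2}) used in the proof of Proposition \ref{prop_starOm}. That identity realizes $(\heat)\log\det\ST$ as an explicit sum $\sum c_{ijk}h_{ijk}^2$ whose coefficients are nonnegative whenever $\ld_i+\ld_j\geq 0$ and $1+\ld_i\ld_j\geq 0$, with an extra ``curvature reaction'' that accounts for the $2|A|^2$ term. The inequality then follows by matching terms: subtract $2|A|^2 = 2\sum_{i,j,k}h_{ijk}^2$ and the Cauchy--Schwarz upper bound $\frac{1}{n(n-1)}|\nabla\log\det\ST|^2$ from the $c_{ijk}$, and check that each residual coefficient is nonnegative.

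The main obstacle is precisely this bookkeeping of coefficients. For the fully off-diagonal $h_{ijk}^2$ with distinct $i,j,k$, nonnegativity after subtracting $2$ is immediate from \eqref{2convex0}, since these terms do not contribute to $|\nabla\log\det\ST|^2$. The delicate case is the $h_{iii}^2$ and $h_{iij}^2$ entries, which simultaneously carry the $2|A|^2$ debit and receive a share of the Cauchy--Schwarz bound through the $n-1$ factors $\log(S_{ii}+S_{jj})$ involving the index $i$. One must group these $n-1$ factors, balance their cross-contributions against $c_{iii}$ and $c_{iij}$, and use the strict positivity of $\ld_i+\ld_j$ and $1+\ld_i\ld_j$ to ensure each reduced coefficient stays nonnegative. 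Once this accounting is verified, the desired estimate $(\heat)\log\det\ST \geq 2|A|^2 + \frac{1}{n(n-1)}|\nabla\log\det\ST|^2$ drops out immediately.
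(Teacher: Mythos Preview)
Your overall framework---cite the evolution identity from \cite{TTW22l}, bound $|\nabla\log\det\ST|^2$ via Cauchy--Schwarz over the $\binom{n}{2}$ factors, and match---is exactly what the paper does, and your identification of the $n(n-1)$ constant is correct. But the bookkeeping you describe will not close, because you have the wrong structural model for the evolution identity. The relevant input is \cite{TTW22l}*{Proposition 2.1}, and it does \emph{not} express $(\heat)\log\det\ST$ as a sum $\sum c_{ijk}h_{ijk}^2$. For each $k$ and each $i<j$ it produces the three blocks
\[
4h_{kij}^2 \;+\; \frac{(1+\ld_i^2)(1+\ld_j^2)}{(\ld_i+\ld_j)^2}(h_{kii}+h_{kjj})^2 \;+\; \frac{(1+\ld_i^2)(1+\ld_j^2)}{(1+\ld_i\ld_j)^2}(h_{kii}-h_{kjj})^2,
\]
so there are genuine cross terms $h_{kii}h_{kjj}$ that do not cancel. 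Treating the expression as if it were diagonal in $\{h_{ijk}^2\}$ and ``checking residual coefficients'' is not meaningful here, and the case split you outline (off-diagonal versus $h_{iii}^2,h_{iij}^2$) does not correspond to the actual structure.

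The two algebraic moves that replace your bookkeeping are these. First, the identity $(1+\ld_i^2)(1+\ld_j^2)=(\ld_i+\ld_j)^2+(1-\ld_i\ld_j)^2=(1+\ld_i\ld_j)^2+(\ld_i-\ld_j)^2$ peels a unit-coefficient square off each weighted square above; the unit parts $4h_{kij}^2+(h_{kii}+h_{kjj})^2+(h_{kii}-h_{kjj})^2$ summed over $k$ and $i<j$ already dominate $2|A|^2$, leaving a remainder still organized in the variables $(h_{kii}\pm h_{kjj})$. Second---and this is the step missing from your plan---one rewrites the gradient in the \emph{same} variables:
\[
(S_{ii}+S_{jj})^{-1}\nabla_k(S_{ii}+S_{jj}) \;=\; -\frac{1-\ld_i\ld_j}{\ld_i+\ld_j}(h_{kii}+h_{kjj}) + \frac{\ld_i-\ld_j}{1+\ld_i\ld_j}(h_{kii}-h_{kjj}).
\]
After Cauchy--Schwarz (once over $i<j$, once on this two-term sum) the bound on $|\nabla\log\det\ST|^2$ lands on exactly $n(n-1)$ times the remainder from the first step. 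The whole proof lives in the $(h_{kii}\pm h_{kjj})$ grouping; no coefficient-by-coefficient check in the $h_{ijk}$ basis is possible.
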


\begin{proof}
According to \cite{TTW22l}*{Proposition 2.1},
\begin{align*}
    &\quad (\heat)\log\det\ST \\
    &\geq \sum_{k}\sum_{i<j} \left[ 4h_{kij}^2 + \frac{(1+\ld_i^2)(1+\ld_j^2)}{(\ld_i+\ld_j)^2}(h_{kii}+h_{kjj})^2 + \frac{(1+\ld_i^2)(1+\ld_j^2)}{(1+\ld_i\ld_j)^2}(h_{kii}-h_{kjj})^2 \right] ~.
\end{align*}
Since
\begin{align*}
    (1+\ld_i^2)(1+\ld_j^2) &= (\ld_i+\ld_j)^2 + (1-\ld_i\ld_j)^2 \\
    &= (1+\ld_i\ld_j)^2 + (\ld_i-\ld_j)^2 ~,
\end{align*}
one finds that
\begin{align}
    &\quad (\heat)\log\det\ST \notag \\
    &\geq \sum_{k}\sum_{i<j} \left[ 4h_{kij}^2 + (h_{kii}+h_{kjj})^2 + (h_{kii} - h_{kjj})^2 \right. \notag \\
    &\qquad\qquad\quad \left. + \frac{(1-\ld_i\ld_j)^2}{(\ld_i+\ld_j)^2}(h_{kii}+h_{kjj})^2 + \frac{(\ld_i-\ld_j)^2}{(1+\ld_i\ld_j)^2}(h_{kii}-h_{kjj})^2 \right] \notag \\
    &\geq 2|A|^2 + \sum_{k}\sum_{i<j} \left[ \frac{(1-\ld_i\ld_j)^2}{(\ld_i+\ld_j)^2}(h_{kii}+h_{kjj})^2 + \frac{(\ld_i-\ld_j)^2}{(1+\ld_i\ld_j)^2}(h_{kii}-h_{kjj})^2 \right] ~. \label{eqn_logdetST}
\end{align}

With respect to the orthonormal frame \eqref{basis0},
\begin{align*}
    \nabla_k(S_{ii}+S_{jj}) &= \left( \frac{1-\ld_i^2}{1+\ld_i^2}h_{kii} + \frac{1-\ld_j^2}{1+\ld_j^2}h_{kjj} \right) ~.
\end{align*}
By the Cauchy--Schwarz inequality,
\begin{align}
    |\nabla\log\det\ST|^2 &= \sum_{k} \left| \sum_{i<j} (S_{ii}+S_{jj})^{-1}\nabla_k(S_{ii}+S_{jj}) \right|^2 \notag \\
    &\leq \frac{n(n-1)}{2} \sum_{k}\sum_{i<j}(S_{ii}+S_{jj})^{-2}|\nabla_k(S_{ii}+S_{jj})|^2 ~. \label{est_STgrad1}
\end{align}

Rewrite $\nabla_k(S_{ii}+S_{jj})$ as follows:
\begin{align*}
    &\quad \frac{1-\ld_i^2}{1+\ld_i^2}h_{kii} + \frac{1-\ld_j^2}{1+\ld_j^2}h_{kjj} \\
    &= \oh\left(\frac{1-\ld_i^2}{1+\ld_i^2}+\frac{1-\ld_j^2}{1+\ld_j^2}\right)(h_{kii}+h_{kjj}) + \oh\left(\frac{1-\ld_i^2}{1+\ld_i^2}-\frac{1-\ld_j^2}{1+\ld_j^2}\right)(h_{kii}-h_{kjj}) \\
    &= \frac{1-\ld_i^2\ld_j^2}{(1+\ld_i^2)(1+\ld_j^2)}(h_{kii}+h_{kjj}) - \frac{\ld_i^2-\ld_j^2}{(1+\ld_i^2)(1+\ld_j^2)}(h_{kii}-h_{kjj}) ~.
\end{align*}
Together with \eqref{2convex0}, it leads to
\begin{align}
    &\quad (S_{ii}+S_{jj})^{-2}|\nabla_k(S_{ii}+S_{jj})|^2 \notag \\
    &= \left[ \frac{1-\ld_i\ld_j}{\ld_i+\ld_j}(h_{kii}+h_{kjj}) - \frac{\ld_i-\ld_j}{1+\ld_i\ld_j}(h_{kii}-h_{kjj}) \right]^2 \notag \\
    &\leq 2\frac{(1-\ld_i\ld_j)^2}{(\ld_i+\ld_j)^2}(h_{kii}+h_{kjj})^2 + 2\frac{(\ld_i-\ld_j)^2}{(1+\ld_i\ld_j)^2}(h_{kii}-h_{kjj})^2 ~. \label{est_STgrad2}
\end{align}

Putting \eqref{est_STgrad1}, \eqref{est_STgrad2} and \eqref{eqn_logdetST} together finishes the proof of this proposition.
\end{proof}

\section{Long-Time Existence}

\subsection{A Maximum Principle}

We first establish a maximum principle, whose proof is based on the argument in \cite{EH91}*{Theorem 2.1}.  

\begin{lem} \label{lem_max}
    Let $F:L\times[0,T)\to\BR^N$ be a solution to the mean curvature flow\footnote{in the parametric form, $\frac{\pl}{\pl t}F = H$}.  Suppose that $v$ is a positive smooth function satisfying
    \begin{align}
        (\heat)\log v &\leq -q |\nabla \log v|^2
    \label{heat_formal} \end{align}
    for some constant $q>0$, and $\sup_{L\times\{t\}}v < \infty$ for every $t\in[0,T)$.  Then,
    \begin{align}
        \sup_{L\times\{t\}} v &\leq \sup_{L\times\{0\}} v
    \end{align}
    for every $t\in(0,T)$.
\end{lem}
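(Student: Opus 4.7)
The plan is to follow an Ecker-Huisken style barrier argument. Set $M := \sup_{L\times\{0\}}v$. For parameters $\ep > 0$ and $A > 2n$, consider the family of barriers
\[ \bar v_\ep(x,t) := M\exp\bigl(\ep(|F(x,t)|^2 + At)\bigr). \]
The goal is to show $v \leq \bar v_\ep$ on $L\times[0,T)$ for every such $\ep$, and then let $\ep \to 0^+$ to conclude $v \leq M$.

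The core computation relies on the standard MCF identity $(\heat)|F|^2 = -2n$, which follows from $\pl_t F = H = \Dt F$ together with the Gauss formula. Therefore $(\heat)\log\bar v_\ep = \ep(A - 2n) > 0$. Setting $g_\ep := \log v - \log\bar v_\ep$ and invoking the hypothesis gives
\[ (\heat) g_\ep \;\leq\; -q|\nabla\log v|^2 - \ep(A - 2n) \;<\; 0 ~. \]
At $t=0$, $g_\ep(x,0) = \log v(x,0) - \log M - \ep|F(x,0)|^2 \leq 0$, so the barrier dominates initially.

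The crux is then a parabolic maximum principle on the non-compact domain $L\times[0,t_1]$ for arbitrary $t_1 \in (0,T)$. Per time slice, the contribution $-\ep|F|^2$ combined with the hypothesis $\sup_L v(\cdot,t) < \infty$ forces $g_\ep(\cdot,t) \to -\infty$ as $|F|\to\infty$, so the spatial supremum is attained in the interior. To rule out attainment at positive time, introduce the Hamilton-style perturbation $\tilde g := g_\ep - \dt t$ with $\dt > 0$, so that $(\heat)\tilde g < 0$ strictly. At any interior spacetime maximum $(x^*,t^*)$ with $t^* > 0$, the first- and second-order tests would give $\pl_t\tilde g(x^*,t^*) \geq 0$ and $\Dt\tilde g(x^*,t^*) \leq 0$, hence $(\heat)\tilde g(x^*,t^*) \geq 0$, contradicting strictness. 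The supremum must therefore be attained at $t=0$ where $\tilde g \leq 0$, whence $g_\ep \leq \dt t$; letting $\dt \to 0$ and then $\ep \to 0$ yields the lemma.

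The main obstacle I anticipate is rigorously justifying that the spacetime supremum of $\tilde g$ on $L\times[0,t_1]$ is actually attained rather than merely finite, since the pointwise-in-time hypothesis $\sup_L v(\cdot,t) < \infty$ is not a priori uniform on $[0,t_1]$. This will be addressed by exhausting $L$ with compact sublevel sets of $|F|^2 + 2nt$ (making essential use of properness of the underlying submanifold, which holds in the graphical applications of the lemma) and passing to the limit, exploiting the spatial decay supplied by the $-\ep|F|^2$ term uniformly in $t$ on the exhaustion.
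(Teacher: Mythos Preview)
Your barrier approach differs from the paper's Ecker--Huisken localization: the paper multiplies $w^2=v^{2q}$ by the cutoff $\vph_+^2$ with $\vph=R^2-|F|^2-2nt$, obtains $(\heat)(\vph_+^2w^2)\leq 2\vph_+^{-1}\ip{\nabla\vph_+}{\nabla(\vph_+^2w^2)}$ (the exponent $q$ is precisely what makes the cross terms absorb; your argument uses only the weaker consequence $(\heat)\log v\leq 0$), applies the maximum principle on the compact support of $\vph_+$, and then sends $R\to\infty$. The intermediate local estimate \eqref{local_est} obtained before that limit is moreover reused in the proof of Theorem~\ref{thm_longt_C2} to pass \eqref{bound1} from the approximants $w^k$ to the limit $u$, something your global barrier would not supply even if completed.

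The gap you flag is real and your proposed fix does not close it. On the exhausting sets $\{|F|^2+2nt\leq R^2\}$ the term $-\ep|F|^2$ is bounded \emph{below}, not decaying, so it gives no control of $g_\ep$ on the lateral boundary; to run a maximum principle on that region you would still need to bound $v$ on that boundary, which is circular. The paper's multiplicative cutoff avoids this entirely because $\vph_+$ vanishes on that boundary by construction, so $\vph_+^2w^2=0$ there regardless of how large $v$ is: the compact domain on which the comparison is run is fixed by $R$ alone, with no dependence on any a priori bound for $v$. That decoupling is the missing idea.
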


\begin{proof}
Let $\dim L = n$.  It follows from $\frac{\pl}{\pl t} F = H$ that
\begin{align}
    (\heat)|F|^2 &= -2n ~.
\label{test0} \end{align}
Fix $R>0$, and consider the function $\vph = R^2 - |F|^2 - 2nt$.  It follows from \eqref{test0} that
\begin{align}
    \nabla\vph^2 &= 2\vph\,\nabla\vph ~, \label{test1} \\
    (\heat)\vph^2 &= -2\left|\nabla\vph\right|^2 \label{test2}
\end{align}

By \eqref{heat_formal}, the functions $w = v^{q} = \exp(q\log v) > 0$ obeys
\begin{align*}
    (\heat)w &= qw\left[(\heat)\log v\right] - w^{-1}|\nabla w|^2 \leq -2w^{-1}|\nabla w|^2 ~.
\end{align*}
Together with \eqref{test1} and \eqref{test2},
\begin{align}
    (\heat)(\vph^2w^2) &\leq -6|\nabla w|\vph^2 - 2\left|\nabla\vph\right|^2w^2 - 2\ip{\nabla \vph^2}{\nabla w^2} \notag \\
    &\quad + c \left( \vph^{-1}\ip{\nabla\vph}{\nabla(\vph^2w^2)} - 2w^2|\nabla\vph|^2 - \vph\ip{\nabla\vph}{\nabla w^2} \right) \notag \\
    &= c\,\vph^{-1}\ip{\nabla\vph}{\nabla(\vph^2w^2)} \notag \\
    &\quad - 6|\nabla w|\vph^2 - 2(1+c)\left|\nabla\vph\right|^2w^2 - (8+2c)\ip{\vph\nabla w}{w\nabla\vph} \label{rev01}
\end{align}
for any $c\in\BR$.

By taking $c=2$, \eqref{rev01} becomes $-6\,|\vph\nabla w + w\nabla\vph|^2$, and hence
\begin{align}
    (\heat)(\vph^2w^2) &\leq 2\vph^{-1}\ip{\nabla\vph}{\nabla(\vph^2w^2)} ~.
\end{align}
By replacing $\vph$ with $\vph_+ = \max\{\vph,0\}$, the computation remains valid.  Due to the maximum principle,
\begin{align}
    \sup_{L\times\{t\}}(\vph_+\cdot w) &\leq \sup_{L\times\{0\}} (\vph_+\cdot w) ~.
\label{local_est} \end{align}
By letting $R\to\infty$, it implies that $\sup_{L\times\{t\}} w \leq \sup_{L\times\{0\}} w$.
\end{proof}


\subsection{Smooth Initial Condition}

\begin{prop} \label{prop_longt_smooth}
    Let $u_0:\BR^n\to\BR$ be a smooth function with $\sup|D^\ell u_0|<\infty$ for any $\ell\geq 2$.  Suppose that
    \begin{align}
       u_0 \text{ is strictly $2$-convex,}\quad  *\Om \geq \vep_1 \quad\text{and}\quad \det\ST \geq \vep_2
    \label{bound1} \end{align}
    for some $\vep_1,\vep_2\in(0,1)$.  Then, \eqref{LMCF0} admits a unique smooth solution $u(x,t):\BR^n\times[0,\infty)\to\BR$ which has the following properties.
    \begin{enumerate}
        \item The solution $u(x,t)$ is strictly $2$-convex for all $t$.  Specifically, \eqref{bound1} is preserved along the flow.
        \item For any $\ell>2$, there exists a constant $c_\ell = c_\ell(\vep_1)>0$ such that $\sup_{x\in\BR^n}|D^\ell u|^2 \leq c_\ell t^{2-\ell}$ for any $t>0$.
    \end{enumerate}
\end{prop}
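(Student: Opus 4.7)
The strategy has three components: (i) propagate the pointwise bounds $*\Om\geq\vep_1$ and $\det\ST\geq\vep_2$ along the flow using the evolution inequalities of Section 4 together with Lemma \ref{lem_max}; (ii) bootstrap higher derivative estimates from the resulting uniform $C^2$ bound via the Nguyen--Yuan estimate and a rescaling; and (iii) rule out finite-time singularity via Corollary \ref{cor_sing}. Uniqueness is the Chen--Pang result quoted in Section 3.

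First, Proposition \ref{prop_short_time} produces a smooth solution on a maximal interval $[0,T)$ with $T\in(0,\infty]$. To propagate \eqref{bound1}, I would rewrite Proposition \ref{prop_starOm} as the statement that $v_1:=1/(*\Om)$ obeys $(\heat)\log v_1\leq-\frac{1}{n}|\nabla\log v_1|^2$, and Proposition \ref{prop_detST} --- whose hypothesis is strict $2$-convexity --- as the analogous inequality for $v_2:=1/\det\ST$ with constant $\frac{1}{n(n-1)}$. An open-closed continuity argument then closes the loop: let $T_*$ be the supremum of $s\in[0,T)$ for which $*\Om\geq\vep_1/2$ and $\det\ST\geq\vep_2/2$ on $[0,s]$; by strict inequality at $t=0$, $T_*>0$. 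On $[0,T_*]$, strict $2$-convexity holds with quantitative slack, so the $x$-suprema of $v_1,v_2$ are finite at each time (using the $C^2$ bound from Proposition \ref{prop_short_time}), and Lemma \ref{lem_max} delivers the improved bounds $*\Om\geq\vep_1$ and $\det\ST\geq\vep_2$ on $[0,T_*]$. Continuity then extends the halved bounds past $T_*$, forcing $T_*=T$; hence \eqref{bound1} is preserved on $[0,T)$, proving (1).

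With \eqref{bound1} in hand, \eqref{bound_slope} yields $|D^2u|^2\leq\vep_1^{-2}-1=:c_2$ uniformly on $\BR^n\times[0,T)$. For any $(x_0,t_0)$ with $t_0\in(0,T)$, set $\td u(y,s):=t_0^{-1}u(x_0+\sqrt{t_0}\,y,\,t_0(1+s))$ on $Q_1$. Both \eqref{LMCF0} and the conditions \eqref{bound1} are invariant under this rescaling, and $D^\ell_y\td u=t_0^{(\ell-2)/2}D^\ell u$. Applying Theorem \ref{thm_apriori} to $\td u$ gives $\|D^3\td u\|_{L^\infty(Q_{1/2})}\leq c\, c_2^{1/2}$, which unscales to $|D^3u(x_0,t_0)|^2\leq c_3 t_0^{-1}$. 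For $\ell\geq4$, I would differentiate \eqref{LMCF0} to obtain linear uniformly parabolic equations (with ellipticity constants controlled by $c_2$) for the higher derivatives, and iteratively invoke parabolic Schauder theory to bound $\|D^\ell\td u\|_{L^\infty(Q_{1/2})}$ in terms of $c_2$ alone; unscaling yields the claimed $|D^\ell u|^2\leq c_\ell t^{2-\ell}$.

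Finally, if $T<\infty$, combining the uniform $C^2$ bound with the uniform $D^3u$ bound (valid on $[t_*,T)$ for any $t_*>0$ by the rescaling argument and on $[0,t_*]$ by Proposition \ref{prop_short_time}) would contradict Corollary \ref{cor_sing}. Hence $T=\infty$. The main obstacle I anticipate is the continuity argument in step (i): Proposition \ref{prop_detST} is only meaningful under strict $2$-convexity, so one must carefully couple the preservation of $\det\ST\geq\vep_2$ with the preservation of strict $2$-convexity, ultimately relying on some uniform-in-$x$ short-time regularity to justify the open half of the open-closed argument. The scaling and Schauder bootstrap steps are more routine.
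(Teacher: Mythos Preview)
Your proposal is correct and largely parallels the paper's proof: the open--closed argument for preserving \eqref{bound1} via Lemma \ref{lem_max}, and the rescaling to $Q_1$ combined with Theorem \ref{thm_apriori} for the $\ell=3$ estimate, are exactly what the paper does in its Steps 1 and 3.

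The one substantive difference is in how you exclude finite-time singularity. You feed the scaled Nguyen--Yuan bound $|D^3u|^2\leq c_3 t^{-1}$ directly into Corollary \ref{cor_sing}: since this gives a uniform $D^3u$ bound on $[t_*,T)$ for any $t_*>0$, and short-time regularity handles $[0,t_*]$, no blow-up can occur. The paper instead argues by contradiction via a blow-up sequence: assuming $|D^3u|$ blows up, it rescales to produce an ancient solution with bounded Hessian, invokes the Liouville theorem (Proposition \ref{prop_Liouville}) to conclude it is stationary, and then the Bernstein theorem \cite{TW02}*{Theorem A} to conclude it is affine---contradicting the normalization $|D^3\td u(0,0)|\geq 1/2$. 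Your route is more economical, since it never calls on Liouville or Bernstein; the paper's blow-up argument is the more traditional geometric-analysis template and would survive even if Theorem \ref{thm_apriori} only delivered H\"older continuity of $D^2u$ rather than a pointwise $D^3$ bound. Both are valid here.
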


\begin{proof}

{\it Step 1.~Preservation of \eqref{bound1}.}
Denote by $T$ the maximal existence time.  Since
\begin{align*}
    \CP &= \{t\in[0,T): \text{\eqref{bound1} holds true everywhere on } L_{u(\cdot,t)} \}
\end{align*}
is a closed subset of $[0,T)$, it remains to show that if $t_0\in\CP$, $[t_0,t_0+\dt)\subset\CP$ for some $\dt = \dt(t_0)>0$.

It suffices to do it for $t_0 = 0$.  Note that \eqref{bound_slope}, \eqref{bound_2con1} and \eqref{bound_2con2} hold true everywhere at $t=0$.  According to Proposition \ref{prop_short_time}, $|D^\ell u(x,t)|$ is bounded over $\BR^n\times[0,T/2]$ for any $\ell\geq2$.  By \eqref{LMCF0} and Remark \ref{rmk_dangle}, $D^2u(x,t)$ is a small\footnote{uniformly in space} perturbation of $D^2u(x,0)$ for $t<\!<1$.  Recall that the spectrum of a symmetric matrix is stable with respect to small perturbations; see for instance \cite{Tao12}*{section 1.3} and the references therein.  Together with \eqref{bound_slope}, \eqref{bound_2con1} and \eqref{bound_2con2} at $t = 0$, one infers that there exists a $\dt \in (0,T/2]$ such that $u$ remains $2$-convex for $t\in[0,\dt)$.

According to Proposition \ref{prop_detST},
\begin{align*}
     (\heat)\log(\det\ST)^{-1} &\leq -\frac{1}{n(n-1)}\left|\nabla\log(\det\ST)^{-1}\right|^2
\end{align*}
for $t\in[0,\dt)$.  By applying Lemma \ref{lem_max} for $v = (\det\ST)^{-1}$, we find that $\det\ST \geq \vep_2$ for $t\in[0,\dt)$.

Similarly, Proposition \ref{prop_starOm} says that
\begin{align*}
    (\heat)\log(*\Om)^{-1} &\leq \frac{1}{n}\left|\nabla\log(*\Om)^{-1}\right|^2
\end{align*}
for $t\in[0,\dt)$.  It follows from Lemma \ref{lem_max} for $v = (*\Om)^{-1}$ that $(*\Om)\geq\vep_1$ for $t\in[0,\dt)$.  Hence, $\CP = [0,T)$.

{\it Step 2.~Long Time Existence.}
The next step is to show the maximal existence time $T$ is infinity.  Suppose not.  By Corollary \ref{cor_sing},
\begin{align*}
    A(t) = \sup\{ |D^3u(x,t')| : x\in\BR^n, t'\leq t \}
\end{align*}
is unbounded as $t\to T$.  It follows that there exists a sequence $(x_k,t_k)\in\BR^n\times[0,T)$ such that
\begin{itemize}
    \item $t_k \to T$ as $k\to\infty$;
    \item $A(t_k) \to \infty$ as $k\to\infty$;
    \item $|D^3u(x_k,t_k)|\geq A(t_k)/2$ for all $k$.
\end{itemize}
Denote $A(t_k)$ by $\rho_k$.  Let
\begin{align*}
    \td{u}_k(y,s) &= \rho_k^2 \left[ u(\frac{y}{\rho_k},t_k+\frac{s}{\rho_k^2}) - u(0,t_k) - Du(0,t_k)\cdot\frac{y}{\rho_k} \right]
\end{align*}
for $y\in\BR^n$ and $s\in[-\rho_k^2t_k,0]$.  Then one has $\td{u}_k(0,0) = 0 = D\td{u}_k(0,0)$, $|D^3\td{u}(0,0)| \geq 1/2$, and $|D^3\td{u}(y,t)|\leq 1$ on $\BR^n\times[-\rho_k^2t_k,0]$.  It is straightforward to see that $\td{u}_k$ solves \eqref{LMCF0} on $\BR^n\times[-\rho_k^2t_k,0]$.   Note that the eigenvalues of $D^2_y\td{u_k}$ is the same as those of $D^2_x u$.

By using Lemma \ref{lem_finite_sing} and the Arzel\`{a}--Ascoli theorem, $\td{u}_k$ admits a subsequence which converges to $\td{u}:\BR^n\times(-\infty,0]$ in $C^{\infty}_{\text{loc}}$.  Hence, $\td{u}$ is an ancient solution to \eqref{LMCF0} with $|D^3\td{u}(0,0)|\geq 1/2$, and the eigenvalues of $D^2\td{u}$ satisfies \eqref{bound_slope}, \eqref{bound_2con1} and \eqref{bound_2con2}.  Due to Proposition \ref{prop_Liouville}, $\td{u}$ is stationary.  Since $1+\ld_i\ld_j \geq 0$ and $\ld_i$'s are bounded, \cite{TW02}*{Theorem A} asserts that $L_{\td{u}}$ is an affine $n$-plane.  This contradicts to $|D^3\td{u}(0,0)|\geq 1/2$.  Thus, the maximal existence time $T$ cannot be finite.

{\it Step 3.~Estimates.}  The argument for assertion (ii) follows from the a priori estimate \cite{NY11}*{Theorem 1.1} and the scaling argument; see \cite{CCY13}*{p.173}.  The $\ell=3$ case is included here for completeness.  Fix $(x,t)\in\BR^n\times(0,\infty)$.  Let
\begin{align*}
    \hat{u}(y,s) &= \frac{1}{t}u(x+t^\oh y,t(1+s)) ~.
\end{align*}
Then, $\hat{u}$ is a solution to \eqref{LMCF0} on $Q_1 = B_1(0)\times[-1,0]$.  Since the second order derivative remains unchanged in the rescaling, it follows from Theorem \ref{thm_apriori} that $||D^3\hat{u}||_{L^\infty}(Q_\oh) \leq c||D^2\hat{u}||_{L^\infty}(Q_1)$.  Due to \eqref{bound_slope}, $|D^2\hat{u}|$ is uniformly bounded.  Therefore,
\begin{align*}
    |D^3u(x',t')| &\leq c(\vep_1)\,t^{-\oh}
\end{align*}
for any $(x',t')$ with $|x'-x|\leq t^\oh/2$ and $3t/4 \leq t'\leq t$.  The estimate holds true for any $(x,t)$, and it finishes the proof for $\ell = 3$.  The estimate for $\ell>3$ follows a scaling argument; see the last part in the proof of \cite{CCH12}*{Lemma 5.2}.
\end{proof}


\subsection{$C^2$ Initial Condition}

With Proposition \ref{prop_longt_smooth}, we can now prove the main theorem of this paper.  In \cite{CCH12}*{section 5 and 6} and \cite{CCY13}*{section 3}, the long-time existence was proved by constructing smooth approximations to the initial condition.  Unlike their situation, the convolution between the standard mollifier and a $2$-convex function needs not to be $2$-convex.  It requires some extra work to handle this issue.

The following lemma says that for a \emph{strict} $2$-convex region, one can find a positive cone such that the corresponding translation leaves the region invariant.  In fact, the slope of the cone is determined by the corner points of the boundary of the region.  See Figure \ref{pic1} for the $2$-convex region, and Figure \ref{pic2} for a strictly $2$-convex region.

\begin{lem} \label{lem_pos_cone}
    For any $\dt_1\in(0,1)$ and $\dt_2>0$, let $Q_{\dt_1,\dt_2} = \{(x,y)\in\BR^2: 1+xy\geq\dt_1 ,\, x+y\geq\dt_2\}$.  There exists a $\tau>1$ depending on $\dt_1,\dt_2$ such that    \begin{align*}
        Q_{\dt_1,\dt_2} + C_\tau \subset Q_{\dt_1,\dt_2}
    \end{align*}
    where $C_\tau = \{ (x,y): x\geq 0 ,\, x/\tau\leq y\leq \tau x \}$.  The above sum of two sets is the Minkowski sum.
\end{lem}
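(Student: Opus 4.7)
The plan is to verify separately that each of the two defining inequalities of $Q_{\dt_1,\dt_2}$ is preserved under translation by any vector in $C_\tau$, and to find the largest $\tau>1$ for which this is possible.  For $(a,b)\in C_\tau$ one has $a\geq 0$ and $0\leq a/\tau\leq b\leq\tau a$, so in particular $a,b\geq 0$.  Consequently the linear inequality $x+y\geq\dt_2$ is trivially preserved, and only the hyperbolic inequality $xy\geq\dt_1-1$ requires care.

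For $(x_0,y_0)\in Q_{\dt_1,\dt_2}$ and $(a,b)\in C_\tau$, I would expand
\[
(x_0+a)(y_0+b) - x_0 y_0 \;=\; a y_0 + b x_0 + ab.
\]
If $x_0\geq 0$ and $y_0\geq 0$ this is non-negative and the inequality is immediate.  The only non-trivial cases are $x_0<0<y_0$ and, by the $x\leftrightarrow y$ symmetry of $Q_{\dt_1,\dt_2}$, $y_0<0<x_0$.  In the former, using $b\leq\tau a$, one obtains
\[
a y_0 + b x_0 + ab \;\geq\; a y_0 - \tau a|x_0| + ab \;\geq\; a(y_0 - \tau|x_0|),
\]
which is non-negative provided $\tau\leq y_0/|x_0|$.

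The task thus reduces to minimizing $y_0/|x_0|$ over $\{(x_0,y_0)\in Q_{\dt_1,\dt_2}:x_0<0\}$.  On this set the two defining inequalities read $\dt_2+|x_0|\leq y_0\leq(1-\dt_1)/|x_0|$, and consistency forces $|x_0|^2+\dt_2|x_0|\leq 1-\dt_1$, equivalently $|x_0|\leq |x_-|$, where $|x_-|$ is the positive root of $t^2+\dt_2 t = 1-\dt_1$.  Since $y_0/|x_0|\geq 1+\dt_2/|x_0|$ is decreasing in $|x_0|$, the infimum is attained at the corner $(x_-,y_+)$ of $Q_{\dt_1,\dt_2}$ in the second quadrant where the line meets the hyperbola, giving
\[
\tau_* \;:=\; \inf\frac{y_0}{|x_0|} \;=\; \frac{y_+}{|x_-|} \;=\; \frac{1-\dt_1}{|x_-|^2}.
\]
From $|x_-|^2 = (1-\dt_1) - \dt_2|x_-| < 1-\dt_1$ one reads off $\tau_*>1$.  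The symmetric case $y_0<0<x_0$ uses instead the lower bound $b\geq a/\tau$ and yields the same threshold by the $x\leftrightarrow y$ symmetry.  Hence any choice $\tau\in(1,\tau_*]$ will work.

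The only conceptual step is recognizing that the worst-case translation direction is the one determined by the corner point $(x_-,y_+)$ of $Q_{\dt_1,\dt_2}$ (geometrically, the direction tangent to the hyperbola at that corner).  Once this is isolated, the rest is a single-variable minimization and elementary algebra; no deeper idea is needed.
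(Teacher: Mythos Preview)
Your proof is correct and follows essentially the same approach as the paper's: both identify the corner point $(x_-,y_+)$ where the line $x+y=\dt_2$ meets the hyperbola $xy=\dt_1-1$ in the second quadrant, take $\tau$ to be the ratio $y_+/|x_-|$, and then verify the hyperbolic inequality by exploiting the bound $b\leq\tau a$ (resp.\ $a\leq\tau b$) when $x_0<0$ (resp.\ $y_0<0$).  The paper carries out the verification via a chain of algebraic inequalities leading to $u(x_0+y_0)\frac{x-x_0}{-x_0}\geq 0$, whereas you phrase it as a one-variable minimization of $y_0/|x_0|$ over the admissible region; these are two presentations of the same computation, and both yield the identical threshold $\tau_*=-y_+/x_-$.
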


\begin{figure}
\centering
    \begin{minipage}[b]{0.4\textwidth}
    \includegraphics[width=\textwidth]{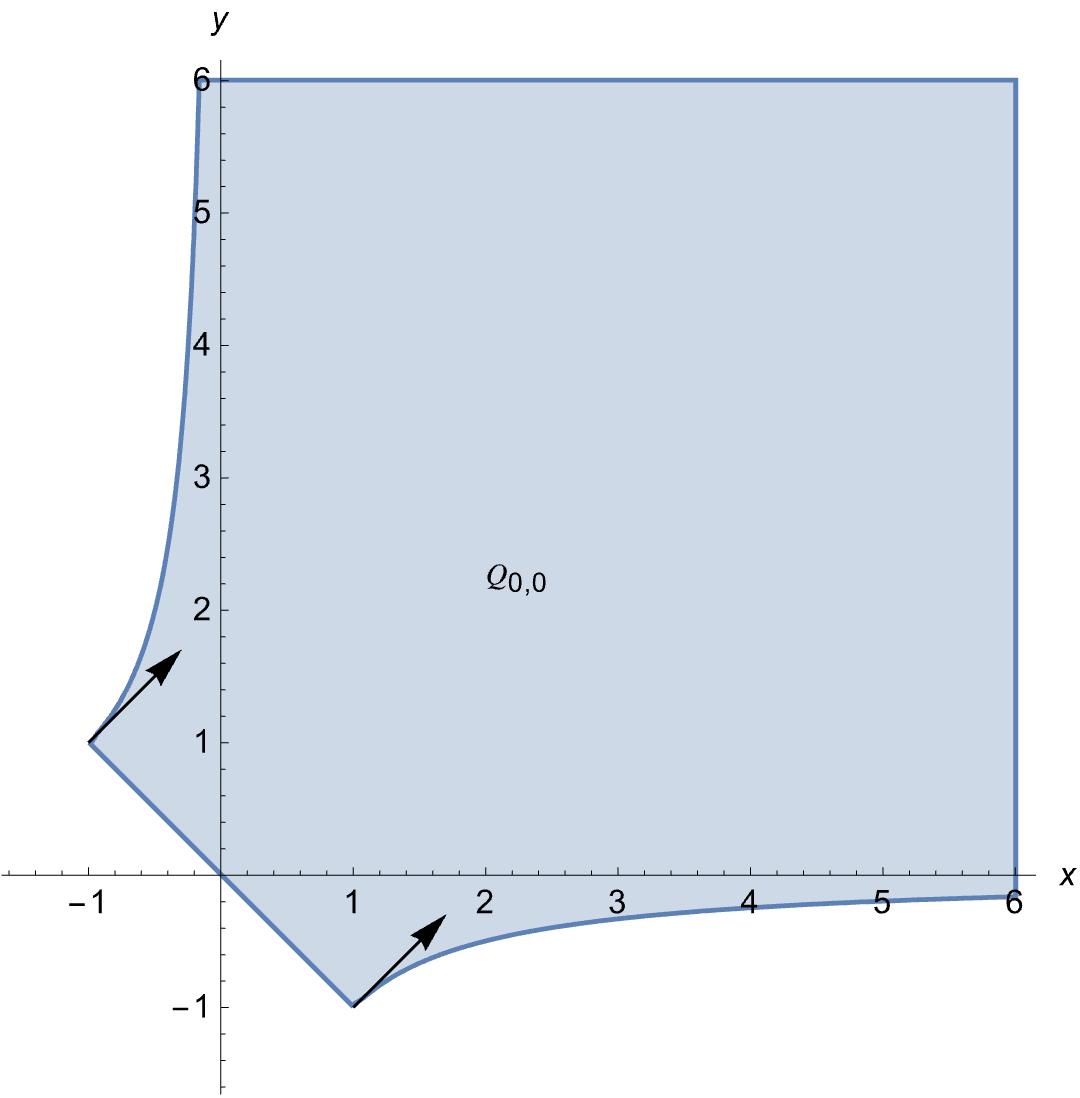}
    \caption{$Q_{\dt_1,\dt_2}$ for $\dt_1 = 0 = \dt_2$}
    \label{pic1}
    \end{minipage}
\hspace{1cm}
    \begin{minipage}[b]{0.4\textwidth}
    \includegraphics[width=\textwidth]{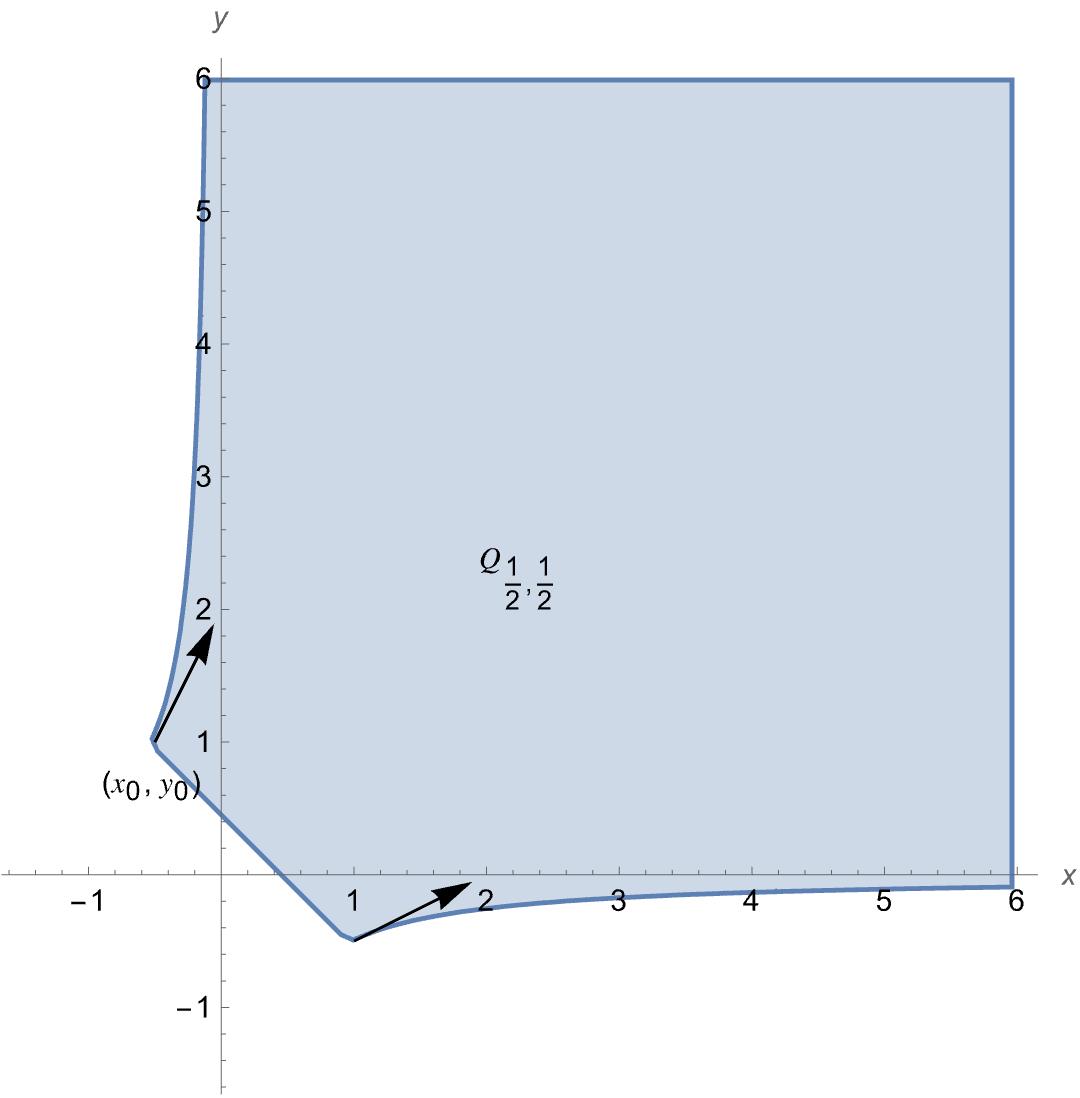}
    \caption{$Q_{\dt_1,\dt_2}$ for $\dt_1 = 1/2 = \dt_2$}
    \label{pic2}
    \end{minipage}
\end{figure}


\begin{proof}
Since $\dt_1\in(0,1)$ and $\dt_2>0$, $1+xy=\dt_1$ and $x+y=\dt_2$ have exactly two intersection point; one in the second quadrant, and the other in the fourth quadrant.  Denote the intersection point in the second quadrant by $(x_0,y_0)$.  It must belong to the region $\Om = \{(x,y)\in\BR^2: -1<x<0 ,\, -x<y<-1/x \}$.  It is not hard to see that $(\dt_1,\dt_2)\in(0,1)\times(0,\infty) \mapsto (x_0,y_0)\in\Om$ is a one-to-one correspondence.  The inverse map is given by $\dt_1 = 1+x_0y_0$ and $\dt_2 = x_0+y_0$.

Let
\begin{align*}
    \tau &= -\frac{y_0}{x_0} ~.
\end{align*}
Pick any $(u,v)\in C_\tau\setminus\{(0,0)\}$.  It is clear that $(x+u)+(y+v) > x_0+y_0$ if $x+y\geq x_0+y_0$.  It remains to verify that $(x+u)(y+v) \geq x_0y_0$ if $xy\geq x_0y_0$ and $x+y\geq x_0+y_0$.  Note that the condition implies that $x\geq x_0$.  Assume that $x<0$, 
\begin{align*}
    (x+u)(y+v)- (x_0y_0) &> u \left(y + x\frac{v}{u}\right) \geq u\left(y + x\tau\right) \\
    &\geq u\left(y - x\frac{y_0}{x_0}\right) \\
    &\geq u\left(x_0+y_0-x - x\frac{y_0}{x_0}\right) = u(x_0+y_0)\frac{x-x_0}{-x_0} \geq0 ~.
\end{align*}
The argument for $y<0$ is similar.  If $x\geq0$ and $y\geq0$, it is obvious.
\end{proof}

Lemma \ref{lem_pos_cone} allows to construct some functions $F_k$ such that $u_0+F_k$ is still $2$-convex, and becomes \emph{convex} outside $B_k(0)$.

\begin{lem} \label{lem_booster}
    Given any $\tau>1$,  there exists a sequence of smooth functions $\{F_k(x)\}_{k\in\BN}$ with the following significance.
    \begin{enumerate}
        \item $D^2F_k = \bfI$ when $|x|\geq k$.
        \item $0< D^2F_k\leq \tau$ everywhere.
        \item At any $x\in\BR^n$, the ratio between any two eigenvalues of $D^2F_k(x)$ belongs to $[1/\tau,\tau]$.
        \item For any $R>0$, $F_k$ converges to $0$ uniformly over $\overline{B_R(0)}$ as $k\to\infty$, so do their derivatives.
    \end{enumerate}
\end{lem}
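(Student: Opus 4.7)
The plan is to construct each $F_k$ as a smooth radial function $F_k(x) = \phi_k(|x|)$. For such functions the Hessian has eigenvalue $\phi_k''(r)$ in the radial direction and $\phi_k'(r)/r$ (with multiplicity $n-1$) in the tangential directions, so conditions (i)--(iv) translate respectively to: $\phi_k(r) = r^2/2 + c_k$ for $r \geq k$; $0 < \phi_k'(r)/r,\ \phi_k''(r) \leq \tau$; $r\phi_k''(r)/\phi_k'(r) \in [1/\tau, \tau]$; and $\phi_k \to 0$ in $C^\infty$ on $[0, R]$ as $k \to \infty$, for each $R > 0$. I would parametrize by $g_k(r) := \phi_k'(r)/r$, the common tangential eigenvalue, so that $\phi_k''(r) = g_k(r) + r g_k'(r)$ and the eigenvalue ratio equals $1 + r (\log g_k)'(r)$. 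In the logarithmic variable $s = \log r$, writing $\Psi_k(s) = \log g_k(e^s)$, the ratio is simply $1 + \Psi_k'(s)$, so condition (iii) reduces to the pointwise bound $\Psi_k'(s) \in [1/\tau - 1, \tau - 1]$.

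The idea is to let $g_k$ equal a small constant $\epsilon_k \to 0$ near the origin and equal to $1$ beyond $r = k$, smoothly interpolated in between. Concretely, fix once and for all a smooth template $\chi : \BR \to [0, 1]$ with $\chi \equiv 0$ on $(-\infty, 0]$, $\chi \equiv 1$ on $[1, \infty)$, and $0 \leq \chi' \leq 2$; then take $\rho_k = \sqrt{k}$, $\epsilon_k = k^{-(\tau-1)/4}$, and set
\begin{align*}
\Psi_k(s) = \log \epsilon_k \cdot \bigl(1 - \chi\bigl((s - \log \rho_k)/\log(k/\rho_k)\bigr)\bigr),
\end{align*}
recovering $g_k$ by exponentiation and $\phi_k(r) = \int_0^r s\, g_k(s)\,ds$. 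Because $g_k(r) \equiv \epsilon_k$ on $[0, \rho_k]$, one has $\phi_k(r) = \epsilon_k r^2/2$ there, which extends to a smooth radial function on $\BR^n$; smoothness across $r = \rho_k$ and $r = k$ is automatic since every derivative of $\chi$ vanishes at $0$ and at $1$.

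Verification is then direct. One computes $\Psi_k'(s) = \tfrac{-\log\epsilon_k}{\log(k/\rho_k)}\chi'(\cdot) = \tfrac{\tau-1}{2}\chi' \in [0, \tau-1]$, so the ratio $1 + \Psi_k'(s) \in [1, \tau] \subset [1/\tau, \tau]$, giving (iii). Since $g_k$ increases monotonically from $\epsilon_k$ to $1$ one has $g_k \in (0, 1]$, and then $\phi_k''(r) = g_k(r)\bigl(1 + \Psi_k'(\log r)\bigr) \leq 1 \cdot \tau = \tau$, which together with $\phi_k'(r)/r = g_k(r) \leq 1 \leq \tau$ gives (ii). Property (i) holds by construction, and for (iv): once $\sqrt{k} > R$, $F_k(x) = \epsilon_k |x|^2/2$ on $\ol{B_R(0)}$, so $F_k$ and all its derivatives tend to $0$ uniformly on $\ol{B_R(0)}$.

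The main obstacle is the tension between the ratio bound (iii) and the requirement $\rho_k \to \infty$. In the log scale, $\Psi_k$ must rise by $-\log \epsilon_k$ across the window $[\log \rho_k, \log k]$, so the average log-slope equals $-\log \epsilon_k / \log(k/\rho_k)$, which is also a lower bound on the peak log-slope of any smooth interpolation. The choice $\rho_k = \sqrt{k}$, $\epsilon_k = k^{-(\tau-1)/4}$ is engineered so that this average equals $(\tau-1)/2$, leaving a factor of two of slack within the allowable bound $\tau - 1$ — just enough room to accommodate a fixed smooth template $\chi$ with $\chi' \leq 2$ while still sending $\rho_k \to \infty$.
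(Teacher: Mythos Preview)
Your proof is correct and follows the same overall strategy as the paper's: take $F_k$ radial and control the two Hessian eigenvalues $\phi_k''(r)$ and $\phi_k'(r)/r$ via their ratio. The paper in fact parametrizes by that ratio directly, setting $u_k(r) = r\phi_k''/\phi_k'$ (which is exactly your $1+\Psi_k'(\log r)$) and choosing $u_k$ to equal $1$ near the origin and for $r\geq k$, with a long plateau $u_k\equiv\tau$ on $[2\ta,k/2]$ for a \emph{fixed} small $\ta$; the decay constant then falls out of the integral formula for $\phi_k'$. Your variant instead pushes the entire transition into the moving window $[\sqrt{k},k]$, so that on any fixed ball $F_k$ is literally the quadratic $\vep_k|x|^2/2$. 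This makes item (iv) immediate, whereas the paper must argue separately that $f_k(r)\leq (2/k)^{\tau-1}r^{\tau}$ on the plateau and that the inner constant $c_k$ tends to zero. Conversely, the paper's prescription of $u_k$ makes (iii) a tautology, while you have to check $\Psi_k'\in[0,\tau-1]$ via the average-slope computation. The two constructions are equivalent in spirit; yours is slightly more self-contained for (iv), the paper's slightly more transparent for (iii).
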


\begin{proof}
Let $r$ be the distance to the origin.  For $F = F(r)$, its Hessian has two eigenvalues, $F''$ and $r^{-1}F'$.  The first one is simple, and the second one has multiplicity $n-1$.  Denote $F'(r)$ by $f(r)$.
Let
\begin{align}
    u(r) &= f'(r) \frac{r}{f(r)} ~,
\label{ratio1} \end{align}
and then
\begin{align}
    f(r) &= f(r_0) \exp\left(\int_{r_0}^r \frac{u(\rho)}{\rho}\dd\rho \right) ~.
\label{ODE1} \end{align}
Thus, one only needs to specify $u(r)$, $r_0$, and $f(r_0)$ to construct $f(r)$.  By requiring $F(0)=0$, the function $F(r)$ is uniquely determined.

Fix a constant $\ta\in(0,1/10)$.  For any $k\in\BN$, choose a smooth function $u_k(r)$ for $r\geq0$ with
\begin{itemize}
    \item $u_k(r) = 1$ if $r\leq\ta$ or $r\geq k$;
    \item $u_k(r) = \tau$ if $2\ta\leq r\leq k/2$;
    \item $u'_k(r)\geq 0$ if $\ta\leq r\leq2\ta$;
    \item $u'_k(r)\leq 0$ if $k/2\leq r\leq k$.
\end{itemize}
The base point $r_0$ is set to be $k$, and the base value $f_k(k)$ is set to be $k$.  One can see Figure \ref{pic3} for an illustration of some $f_k(r)$'s (by using piecewise constant $u_k$).

It follows that $f_k(r) = r$ for $r\geq k$.  When $r\leq k$,
\begin{align*}
    f_k(r) &= k\exp\left(-\int_r^k \frac{u(\rho)}{\rho}\dd\rho\right)
    \leq k\exp\left(-\int_r^k \frac{1}{\rho}\dd\rho\right) = r ~.
\end{align*}
When $2\ta\leq r\leq k/2$,
\begin{align*}
    f_k(r) &= f_k(k/2)\exp\left(-\int_r^{k/2} \frac{u(\rho)}{\rho}\dd\rho\right) \leq \left(\frac{2}{k}\right)^{\tau-1} {r^\tau} ~.
\end{align*}
With a similar argument, $f_k(r) = c_k r$ when $r\leq \ta$, where
\begin{align*}
    c_k &= \frac{f_k(\ta)}{\ta} \leq \frac{f_k(2\ta)}{2\ta} \leq \left(\frac{4\ta}{k}\right)^{\tau-1} ~.
\end{align*}
It is not hard to see that the corresponding $F_k(r)$ satisfies the assertions of this lemma.
\end{proof}

\begin{figure}
\centering
    \includegraphics[width=0.4\textwidth]{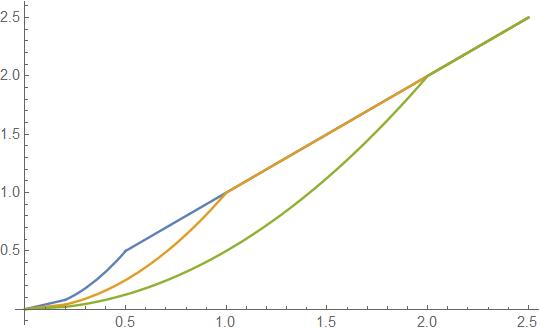}
    \caption{$F'_k(r)$}
    \label{pic3}
\end{figure}

Note that Lemma \ref{lem_booster} (ii) and (iii) implies that $( \ip{(D^2F_k)\bu}{\bu}, \ip{(D^2F_k)\bv}{\bv} )$ belongs to $C_\tau$ defined in Lemma \ref{lem_pos_cone}, for any unit vectors $\bu,\bv\in\BR^n$.

\begin{thm} \label{thm_longt_C2}
    Let $u_0\in C^2(\BR^n)$ satisfying \eqref{bound1} for some $\vep_1,\vep_2\in(0,1)$.  Then, \eqref{LMCF0} admits a unique solution $u(x,t)$ in the space $C^0(\BR^n\times[0,\infty))\cap C^\infty(\BR^n\times(0,\infty))$ such that
    \begin{itemize}
        \item \eqref{bound1} is preserved along the flow;
        \item there exists $c_\ell = c_\ell(\vep_1) >0$ for any $\ell>2$ such that $\sup_{x\in\BR^n}|D^\ell u|^2 \leq c_\ell t^{2-\ell}$ for any $t>0$.
    \end{itemize}
\end{thm}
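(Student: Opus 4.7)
The strategy is to construct smooth strictly $2$-convex approximations $u_0^{(k)}\to u_0$ in $C^2_{\text{loc}}$ with uniform constants, apply Proposition \ref{prop_longt_smooth} to each to obtain smooth entire solutions $u^{(k)}$ with uniform interior estimates, and then extract a $C^\infty_{\text{loc}}$ limit on $\BR^n\times(0,\infty)$; uniqueness from \cite{CP09} then pins down $u$. The scheme parallels \cites{CCH12,CCY13}, but because the nonconvex condition $1+\ld_i\ld_j\geq 0$ means that mollification of a $2$-convex function need not remain $2$-convex, the approximation step requires the new ingredients Lemmas \ref{lem_pos_cone} and \ref{lem_booster}.

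For the approximation, fix $\dt_1,\dt_2$ via \eqref{bound_2con1}--\eqref{bound_2con2} so that the eigenvalue pairs of $D^2u_0$ lie in $Q_{\dt_1,\dt_2}$; let $\tau$ and $F_k$ come from Lemmas \ref{lem_pos_cone} and \ref{lem_booster}, and pick $C=C(\vep_1)$ so large that $D^2u_0+C\bfI$ is positive definite everywhere (possible by \eqref{bound_slope}). Set
\begin{align*}
    u_0^{(k)} &= \eta_{\vep_k}\ast(u_0+CF_k).
\end{align*}
For any orthonormal pair $\bu,\bv\in\BR^n$, Cauchy interlacing on the $2$-plane spanned by $\bu,\bv$ gives $(\ip{D^2u_0(y)\bu}{\bu},\ip{D^2u_0(y)\bv}{\bv})\in Q_{\dt_1,\dt_2}$, while the remark after Lemma \ref{lem_booster} places $(\ip{CD^2F_k(y)\bu}{\bu},\ip{CD^2F_k(y)\bv}{\bv})\in C_\tau$; Lemma \ref{lem_pos_cone} then keeps the sum in $Q_{\dt_1,\dt_2}$, strictly and with a quantitative buffer. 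Outside $B_k(0)$, $D^2u_0+C\bfI>0$ is strictly positive, so $u_0+CF_k$ is genuinely convex there and the nonconvex condition is trivially preserved by mollification. Inside $B_k(0)$, using the uniform continuity of $D^2u_0$ on compacts, choose $\vep_k\downarrow 0$ small enough that the mollification oscillation is dominated by the buffer; it follows that $u_0^{(k)}$ is smooth, strictly $2$-convex, with uniformly bounded derivatives of all orders, while Lemma \ref{lem_booster}(iv) ensures $u_0^{(k)}\to u_0$ in $C^2_{\text{loc}}(\BR^n)$.

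Proposition \ref{prop_longt_smooth} now supplies smooth entire solutions $u^{(k)}(x,t)$ with \eqref{bound1} preserved (with constants, say, $\vep_1/2$ and $\vep_2/2$) and $|D^\ell u^{(k)}(\cdot,t)|^2\leq c_\ell(\vep_1)t^{2-\ell}$ for $\ell\geq 3$, uniformly in $k$; by \eqref{bound_slope} the Hessian is uniformly bounded. A diagonal Arzel\`a--Ascoli extraction gives a subsequence converging in $C^\infty_{\text{loc}}(\BR^n\times(0,\infty))$ to a smooth solution $u$ inheriting these properties. The right-hand side of \eqref{LMCF0} equals $\sum_i\arctan\ld_i$, hence is bounded by $n\pi/2$; thus $|\pl_tu^{(k)}|\leq n\pi/2$ and $|u^{(k)}(x,t)-u_0^{(k)}(x)|\leq \tfrac{n\pi}{2}t$. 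Combined with $u_0^{(k)}\to u_0$, this gives $u\in C^0(\BR^n\times[0,\infty))$ with $u(\cdot,0)=u_0$; uniqueness in the resulting class is \cite{CP09}.

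The main obstacle is the approximation step. Because the $2$-convex matrix region is nonconvex, mollification can push eigenvalue pairs out of $Q_{\dt_1,\dt_2}$. The cone $C_\tau$ in Lemma \ref{lem_pos_cone} is precisely the set of directions in which translations preserve $Q_{\dt_1,\dt_2}$, and Lemma \ref{lem_booster} produces functions $F_k$ whose Hessian perturbations lie consistently in $C_\tau$. Adding $CF_k$ thus creates a margin of strict $2$-convexity that a sufficiently fine mollification can be made to respect; balancing the booster (large enough to yield global convexity outside $B_k(0)$, vanishing in the limit on compacts) against the mollification scale $\vep_k$ is the quantitative heart of the argument.
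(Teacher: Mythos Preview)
Your approach is essentially the paper's: add the booster $F_k$ from Lemma~\ref{lem_booster}, mollify at a scale $\vep_k\downarrow 0$ chosen via uniform continuity on compacts, apply Proposition~\ref{prop_longt_smooth}, and extract a limit. Your Cauchy interlacing argument showing $(\ip{D^2u_0\,\bu}{\bu},\ip{D^2u_0\,\bv}{\bv})\in Q_{\dt_1,\dt_2}$ for arbitrary orthonormal $\bu,\bv$ is correct and spells out a step the paper states without proof. The multiplier $C$ is harmless but unnecessary: strict $2$-convexity already forces every eigenvalue of $D^2u_0$ to exceed $-1$ (if $\ld_1\leq -1$ then $\ld_2\geq -\ld_1\geq 1$ forces $1+\ld_1\ld_2\leq 0$), so $D^2u_0+\bfI$ is automatically positive.

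There is one genuine omission. You only obtain \eqref{bound1} for the limit $u$ with degraded constants $\vep_1/2,\vep_2/2$, whereas the theorem asserts the \emph{original} $\vep_1,\vep_2$ are preserved. The paper closes this gap with an extra argument at the end: it invokes the \emph{local} form of the maximum-principle estimate, namely \eqref{local_est} in Lemma~\ref{lem_max} before sending $R\to\infty$, to bound $(*\Om)^{-1}$ and $(\det\ST)^{-1}$ of $w^k$ on any compact $K_0\subset\BR^n\times(0,\infty)$ by their initial values on a fixed ball $\overline{B_R(0)}$, up to a factor $(1+\ep_0)$. Since $D^2w_0^k\to D^2u_0$ uniformly on $\overline{B_R(0)}$ (this is where your $C^2_{\text{loc}}$ convergence is used), letting $k\to\infty$ and then $\ep_0\to 0$ recovers the sharp $\vep_1,\vep_2$ for $u$. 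This step is also what justifies $c_\ell=c_\ell(\vep_1)$ rather than $c_\ell(\vep_1,\vep_2)$: once $*\Om\geq\vep_1$ holds for $u$ itself, one can rerun the a~priori estimate of Theorem~\ref{thm_apriori} directly on $u$.
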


\begin{proof}
Let
\begin{align*}
    \dt_1 = \min\left\{\frac{\vep_2}{\sqrt{2(\vep_1^{-2}-1)}},\vep_2\right\}  \quad\text{and}\quad  \dt_2 = \frac{2\vep_2}{\vep_1^{-2}+1} ~.
\end{align*}
By \eqref{bound_2con1} and \eqref{bound_2con2}, the eigenvalues of $D^2u_0$ satisfy $1+\ld_i\ld_j\geq\dt_1$ and $\ld_i+\ld_j\geq\dt_2$ for any $i\neq j$.  Let $\tau = \tau(\dt_1,\dt_2)$ be given by Lemma \ref{lem_pos_cone}.  Apply Lemma \ref{lem_booster} for this $\tau$ to get a sequence of functions, $\{F_k(x)\}_{k\in\BN}$.

It follows from Lemma \ref{lem_pos_cone}, Lemma \ref{lem_booster} and \eqref{bound_slope} that the eigenvalues of $D^2(u_0+F_k)$ satisfy
\begin{align} \begin{split}
    &1+\ld_i\ld_j\geq\dt_1 ~,~~ \ld_i+\ld_j\geq\dt_2 ~\text{ for any } i\neq j ~, \\
    &\text{and }~ \sum_i|\ld_i|^2\leq 2(\vep_1^{-2}-1+n\tau^2) ~.
\end{split} \label{bound2} \end{align}
The reason is that the spectrum of a symmetric matrix is stable with respect to small perturbations; see for instance \cite{Tao12}*{section 1.3} and the references therein.  A complete discussion on the perturbation theory of eigenvalues of matrices can be found in \cite{Kato76}*{ch.1 and 2}.  By \eqref{bound2}, there exist $\vep'_1,\vep'_2\in(0,1)$ depending\footnote{The dependence on the dimension $n$ is always omitted in this paper.} on $\vep_1,\vep_2$ such that
\begin{align}
    *\Om \geq \vep'_1 \quad\text{and}\quad \det\ST \geq \vep'_2
\label{bound2a} \end{align}
for $u_0 + F_k$.  Let $A$ be a symmetric matrix whose eigenvalues satisfy \eqref{bound2}.  It is not hard to see that there exists $\dt_3>0$ such that $\dt_3 \leq A+\bfI\leq 1/\dt_3$.  Note that $\{B:\text{ symmetric }n\times n\text{ matrix}: \dt_3\leq B\leq 1/\dt_3\}$ is a convex set, and there exists $\vep''_1, \vep''_2\in(0,1)$ such that
\begin{align}
    *\Om \geq \vep''_1 \quad\text{and}\quad \det\ST \geq \vep''_2
\label{bound2b} \end{align}
for any symmetric matrix\footnote{The functions $\log(*\Om)$ and $\log\det\ST$ are defined for a symmetric matrix by using \eqref{starOm} and \eqref{detST}, respectively.} $B$ with $\dt_3\leq B\leq 1/\dt_3$.

Let $\{\eta_\sm\}_{\sm>0}$ be the standard mollifiers; see for instance \cite{Evans98}*{Appendix C.4}.  They have the following properties: $\eta_\sm\geq0$, $\supp\eta_\sm\subset \overline{B_\sm(0)}$, $\int_{\BR^n}\eta_\sm = 1$ for all $\sm>0$, and $\eta_\sm(x)$ depends only on $|x|$.  For any $k\in\BN$ and $\sm>0$, let
\begin{align}
    \td{w}_0^{k,\sm}(x) &= \int_{\BR^n} \left(u_0(y)+F_k(y)\right)\,\eta_\sm(x-y)\,\dd y ~.
\label{approx_ini} \end{align}
We claim that there exists a sequence of positive numbers $\{\sm_k\}_{k\in\BN}$ with $\sm_k\to0$ as $k\to\infty$ such that $\td{w}_0^{k,\sm_k}$ obeys
\begin{align}
    *\Om \geq \min\{\oh\vep'_1,\vep''_1\} \quad\text{and}\quad \det\ST \geq \min\{\oh\vep'_2,\vep''_2\} ~.
\label{bound2c} \end{align}
Note that
\begin{align*}
    D_x^2\td{w}_0^{k,\sm}(x) &= \int_{\BR^n} D^2_y\left(u_0(y)+F_k(y)\right)\,\eta_\sm(x-y)\,\dd y ~.
\end{align*}
When $|x|\geq k+1$, it follows from Lemma \ref{lem_booster} (i) and \eqref{bound2b} that for $\td{w}_0^{k,\sm}$, $*\Om|_x < \vep''_1$ and $\det\ST|_x \geq \vep''_2$ for any $\sm_k<1$.  When $|x|\leq k+1$,  it follows from \eqref{bound2}, \eqref{bound2a} and the uniform continuity of $D^2(u_0+F_k)$ on $\overline{B_{k+2}(0)}$ that \eqref{bound2c} holds true for $\td{w}_0^{k,\sm_k}$ on $B_{k+1}(0)$, provided that $\sm_k$ is sufficiently small.  This finishes the proof of the claim.

Now, set $w^k_0$ to be $\td{w}_0^{k,\sm_k}$.  With \eqref{bound2c} and
\begin{align*}
    D_x^\ell w_0^k(x) &= \int_{\BR^n} D^2_y\left(u_0(y)+F_k(y)\right)\,D^{\ell-2}_x\eta_{\sm_k}(x-y)\,\dd y ~,
\end{align*}
for any $\ell\geq 2$, one can apply Proposition \ref{prop_longt_smooth} to the initial conditions $w_0^k$.  Denote the solution by $w^k$, and it obeys \eqref{bound2c} for all $t$.

It follows from \eqref{bound2c} and \eqref{bound_slope} that $D^2w^k$ is uniformly bounded on $\BR^n\times[0,\infty)$.  Since $w^k$ solves \eqref{LMCF0}, $\frac{\pl w^k}{\pl t}$ is also uniformly bounded over $\BR^n\times[0,\infty)$.  Fix $R>0$ and $T>0$.  Let $K_{R,T} = \overline{B_R(0)}\times[0,T]\subset\BR^n\times[0,\infty)$.  It follows from \eqref{approx_ini} and Lemma \ref{lem_booster} (iv) that $w^k_0$ is uniformly bounded over $\overline{B_R(0)}$.  With the uniform boundedness of the time derivative, $w^k$ is uniformly bounded over $K_{R,T}$.  Together with the uniform boundedness of $D^2w^k$, $Dw^k$ is uniformly bounded over $K_{R,T}$.  Hence, there exists a subsequence of $w^k$ which converges uniformly on any compact subset of $\BR^n\times[0,\infty)$.  By Lemma \ref{lem_booster} (iv) and $\sm_k\to 0$ as $k\to\infty$, $\lim_{k\to\infty}w^k_0(x) = u_0(x)$.

This together with assertion (ii) of Proposition \ref{prop_longt_smooth} implies that (a subsequence of) $w^k(x,t)$ converges smoothly on any compact subset of $\BR^n\times(0,\infty)$.  The limit $u$ is a solution to \eqref{LMCF0} with initial condition $u_0$.  The decay estimate on the derivatives of $u$ follows from that of $w^k$.  Since the constant $c_\ell$ depends on the $|D^2w^k|$, it is not hard to see from the last item of \eqref{bound2} that $c_\ell$ depends only on $\vep_1$.

By construction, $u$ satisfies \eqref{bound2c} for all $t>0$.  To see that $u$ still satisfies \eqref{bound1}, we will need the local estimate \eqref{local_est} in Lemma \ref{lem_max}.  Fix an ${\ep_0}>0$.  For any $(x_0,t_0)\in\BR^n\times(0,\infty)$, let $K_0$ be its compact neighborhood $\overline{B_1(x_0)}\times[t_0/2,2t_0]\subset\BR^n\times(0,\infty)$.  Since $Dw^k$ is uniformly bounded over $K_0$, there exists an $R>0$ such that
\begin{align*}
    \vph = R^2 - |x|^2 - |Dw^k|^2 - 2nt \geq \frac{R^2}{1+\ep_0}
\end{align*}
on $K_0$.  Note that this $\vph$ is equal to the one in the proof of Lemma \ref{lem_max}.  By Proposition \ref{prop_starOm} and Lemma \ref{lem_max}, \eqref{local_est} for $v= (*\Om)^{-1}$ gives that
\begin{align*}
    \max_{(x,t)\in K_0}\left[(*\Om)\text{ of }w^k\right]^{-\frac{1}{n}} &\leq (1+\ep_0) \max_{x\in\overline{B_R(0)}} \left[(*\Om)\text{ of }w_0^k\right]^{-\frac{1}{n}} ~.
\end{align*}
Due to the uniform continuity of $D^2u_0$ over $\overline{B_R(0)}$, Lemma \ref{lem_booster} (iv), and $\sm_k\to0$,
\begin{align*}
    \limsup_{k\to\infty}\max_{x\in\overline{B_R(0)}} \left[(*\Om)\text{ of }w_0^k\right]^{-1} \leq \frac{1}{\vep_1} ~.
\end{align*}
It follows that $(*\Om)^{-1}$ of $u$ over $K_0$ is no greater than $(1+\ep_0)^n/\vep_1$.  Since it is true for any $\ep_0>0$, $*\Om$ of $u$ is always greater than or equal to $\vep_1$.  The argument for $\det\ST$ is similar.
\end{proof}

Since a $2$-convex initial can be approximated by strictly $2$-convex functions, we can extend Theorem \ref{thm_longt_C2} to $2$-convex initial condition.

\begin{thm} \label{thm_2convexini}
    Let $u_0\in C^2(\BR^n)$ be a $2$-convex function with $\sup_{x\in\BR^n}|D^2 u_0|^2\leq c$ for some $c>0$.   Then, \eqref{LMCF0} admits a unique solution $u(x,t)$ in the space $C^0(\BR^n\times[0,\infty))\cap C^\infty(\BR^n\times(0,\infty))$ such that
    \begin{itemize}
        \item for any $t>0$, $u$ is $2$-convex and $*\Om\geq \vep_1$;
        \item there exists $c_\ell = c_\ell(\vep_1) >0$ for any $\ell>2$ such that $\sup_{x\in\BR^n}|D^\ell u|^2 \leq c_\ell t^{2-\ell}$ for any $t>0$.
    \end{itemize}
\end{thm}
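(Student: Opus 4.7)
The plan is to perturb $u_0$ by a small quadratic so that the strict $2$-convexity hypothesis of Theorem \ref{thm_longt_C2} is satisfied, solve the flow for each perturbation, and pass to a limit as the perturbation vanishes. Concretely, set $u_0^{(\ep)}(x) = u_0(x) + \tfrac{\ep}{2}|x|^2$ for $\ep \in (0,1]$. The eigenvalues of $D^2 u_0^{(\ep)}$ are $\ld_i+\ep$, so the $2$-convexity of $u_0$ gives
\begin{align*}
    (\ld_i+\ep)+(\ld_j+\ep) &\geq 2\ep, \\
    1+(\ld_i+\ep)(\ld_j+\ep) &= 1+\ld_i\ld_j + \ep(\ld_i+\ld_j) + \ep^2 \geq \ep^2,
\end{align*}
making $u_0^{(\ep)}$ strictly $2$-convex. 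The crucial observation is that $|D^2 u_0^{(\ep)}| \leq \sqrt{c}+\sqrt{n}$ uniformly in $\ep\in(0,1]$, so via \eqref{starOm} one obtains a lower bound $*\Om \geq \vep_1 = \vep_1(c)$ for $u_0^{(\ep)}$ that is \emph{independent of} $\ep$. The corresponding lower bound for $\det\ST$ does degenerate as $\ep\to 0$, but Theorem \ref{thm_longt_C2} only needs its existence for each fixed $\ep$.

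Applying Theorem \ref{thm_longt_C2} yields solutions $u^{(\ep)}$ on $\BR^n\times[0,\infty)$ preserving strict $2$-convexity together with $*\Om \geq \vep_1$, and satisfying derivative bounds $\sup|D^\ell u^{(\ep)}|^2 \leq c_\ell\,t^{2-\ell}$ with constants $c_\ell = c_\ell(\vep_1)$ independent of $\ep$. The bound $*\Om \geq \vep_1$ combined with \eqref{bound_slope} gives a uniform bound on $|D^2 u^{(\ep)}|$, and hence by Remark \ref{rmk_dangle} also on $|\pl_t u^{(\ep)}|$, on all of $\BR^n\times[0,\infty)$ and independent of $\ep$. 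Since $u_0^{(\ep)}\to u_0$ locally uniformly, the family $\{u^{(\ep)}\}$ is locally uniformly bounded on $\BR^n\times[0,\infty)$; combined with the $C^\infty_{\text{loc}}$ estimates on $(0,\infty)$, a diagonal Arzel\`a--Ascoli argument extracts a subsequence converging locally uniformly on $\BR^n\times[0,\infty)$ and smoothly on $\BR^n\times(0,\infty)$ to a limit $u$. The limit inherits $2$-convexity (a closed condition), the bound $*\Om \geq \vep_1$, the derivative decay estimates, and solves \eqref{LMCF0} for $t>0$. Uniqueness then follows from \cite{CP09}.

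The main obstacle is matching the initial value at $t=0$, since the higher-derivative estimates blow up there. This is handled by the uniform bound on $\pl_t u^{(\ep)}$: integrating in time gives $|u^{(\ep)}(x,t)-u_0^{(\ep)}(x)|\leq Ct$ with $C$ independent of $\ep$, and together with $u_0^{(\ep)}\to u_0$ locally uniformly this forces $u\in C^0(\BR^n\times[0,\infty))$ with $u(\cdot,0)=u_0$. A secondary point worth double-checking is that the constant $\vep_1$ produced this way depends only on $c$ (and the dimension), so the constants $c_\ell$ in the final derivative estimate really are of the form $c_\ell(c)$ as claimed.
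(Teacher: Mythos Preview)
Your proposal is correct and follows essentially the same approach as the paper: the paper perturbs by $u_0 + e^{-k}|x|^2$, applies Theorem~\ref{thm_longt_C2} to each perturbation, and passes to a subsequential limit using precisely the uniform (in $k$) bounds on $*\Om$, $D^2 u$, and $\pl_t u$ that you identify. Your write-up is in fact more explicit than the paper's about why the $*\Om$-bound (and hence the derivative constants $c_\ell$) is independent of the perturbation parameter while the $\det\ST$-bound need not be, and about how the uniform time-derivative bound forces the correct initial value.
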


\begin{proof}
Consider $w^k_0 = u_0 + e^{-k}|x|^2$ for $k\in\BN$.  Since $|D^2u_0|\leq C$, it is not hard to see that there exist $\vep'_1 = \vep'_1(\vep_1)>0$ and $\vep'_{2,k} = \vep'_{2,k}(\vep_1,k)>0$ such that $*\Om\geq \vep'_1$ and $\det\ST\geq\vep'_{2,k}$ for $w^k_0$.  Denote by $w^k$ the solution to \eqref{LMCF0} with initial condition $w^k_0$ given by Theorem \ref{thm_longt_C2}.  Note that the bound of the derivatives given by Theorem \ref{thm_longt_C2} only depends on $\vep_1$.  By the same argument as that in the proof of Theorem \ref{thm_longt_C2}, there exists a subsequence of $w^k$ which converges in $C^0_{\text{loc}}(\BR^n\times[0,\infty))$ and in $C^\infty(\BR^n\times(0,\infty))$.  The properties asserted by this theorem follows from $\liminf_{k\to\infty}(*\Om \text{ of }w^k_0)\geq \vep_1$ and the smooth convergence of $w^k$ in $C^\infty(\BR^n\times(0,\infty))$.
\end{proof}

\section{Some Convergence Results}

\subsection{Potentials of Bounded Gradient}

\begin{thm}\label{thm_bounded_gradient}
    Let $u_0\in C^2(\BR^n)$ be a $2$-convex function with $\sup_{x\in\BR^n}|D^2 u_0|^2\leq c$ for some $c>0$.  Denote by $u(x,t)$ the solution to \eqref{LMCF0} with $u(x,0) = u_0(x)$ given by Theorem \ref{thm_2convexini}. Then, there exists an $\vec{a}\in\BR^n$ such that $Du(x,t)$ converges to the constant map from $\BR^n$ to $\vec{a}$ in $C^\infty_{\text{loc}}(\BR^n,\BR^n)$.  In other words, $L_u = \{(x,Du(x,t)):x\in\BR^n\}$ converges locally smoothly to $\BR^n\times\{\vec{a}\}$ as $t\to\infty$.
\end{thm}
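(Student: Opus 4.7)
The strategy is three-stage: (i) propagate the gradient bound, (ii) a parabolic blow-down giving locally uniform decay $|D^{2}u|\to 0$, and (iii) pin down a single limiting gradient constant $\vec{a}$. Throughout I use the bounded-gradient hypothesis $|Du_{0}|^{2}\le c_{1}$ (which is stated in the introduction's version of this theorem and is essential: $u_{0}=\oh|x|^{2}$ is 2-convex with $|D^{2}u_{0}|$ bounded but $Du$ never converges to a constant).

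Stage (i). Differentiating \eqref{LMCF0} and applying Remark~\ref{rmk_dangle} shows each component $D_{k}u$ satisfies $\partial_{t}(D_{k}u)=g^{ij}(D^{2}u)\,D_{ijk}u$ with $g^{ij}=(\bfI+(D^{2}u)^{2})^{-1}$, which is uniformly parabolic because $|D^{2}u|\le c_{2}$ by Theorem~\ref{thm_2convexini}. A standard Phragm\'en--Lindel\"of maximum principle for bounded solutions on $\BR^{n}\times[0,\infty)$ (in the spirit of the cutoff argument of Lemma~\ref{lem_max}) yields $\sup_{x}D_{k}u(\cdot,t)$ non-increasing and $\inf_{x}D_{k}u(\cdot,t)$ non-decreasing, preserving $|Du(x,t)|\le c_{1}$.

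Stage (ii). For $\mu>0$ set $u_{\mu}(x,t):=\mu^{-2}\bigl(u(\mu x,\mu^{2}t)-u(0,\mu^{2})\bigr)$, which again solves \eqref{LMCF0} with the same $*\Om$ and $\det\ST$ lower bounds, while $|Du_{\mu}|\le c_{1}/\mu\to 0$ and the derivative estimates of Theorem~\ref{thm_2convexini} rescale to uniform-in-$\mu$ bounds $|D^{\ell}u_{\mu}|^{2}\le c_{\ell}t^{2-\ell}$. Arzel\`a--Ascoli produces a limit $U\in C^{\infty}_{\mathrm{loc}}(\BR^{n}\times(0,\infty))$ along a subsequence, with $DU\equiv 0$ (so $U$ is spatially constant) and $\partial_{t}U=\theta(0)=0$; together with $U(0,1)=0$ this forces $U\equiv 0$. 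A diagonal/subsequence argument upgrades to $u_{\mu}\to 0$ in $C^{\infty}_{\mathrm{loc}}$ as $\mu\to\infty$ along the full family (this is also the special case $U_{0}\equiv 0$ of Theorem~\ref{thm_conv_expander}, which applies since $|u_{0}(\mu x)|/\mu^{2}\le c_{1}|x|/\mu\to 0$). Choosing $\mu=\sqrt{t/\tau_{0}}$ and $y=x/\mu$ then converts smooth convergence of $D^{2}u_{\mu}$ into $\sup_{x\in K}|D^{2}u(x,t)|\to 0$ as $t\to\infty$ for every compact $K\subset\BR^{n}$.

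Stage (iii). The decay in stage (ii) gives $\sup_{K}|D^{2}u(\cdot,t)|\to 0$, hence $\mathrm{osc}_{K}\,Du(\cdot,t)\to 0$ for every compact $K$, so the desired $C^{\infty}_{\mathrm{loc}}$-convergence of $Du$ reduces to showing $Du(0,t)$ has a limit. For any $t_{j}\to\infty$, pass to a subsequence with $Du(0,t_{j})\to\vec{a}$; applying Arzel\`a--Ascoli and parabolic interior regularity to the gauged translates $v_{j}(x,t):=u(x,t+t_{j})-u(0,t_{j})-Du(0,t_{j})\cdot x$ (each a solution of \eqref{LMCF0} with $v_{j}(0,0)=0=Dv_{j}(0,0)$, $|D^{2}v_{j}|\le c_{2}$, and $|D^{\ell}v_{j}|\le c_{\ell}(t+t_{j})^{(2-\ell)/2}\to 0$ for $\ell\ge 3$) produces a $C^{\infty}_{\mathrm{loc}}(\BR^{n}\times\BR)$-limit $v_{\infty}$ with $D^{\ell}v_{\infty}\equiv 0$ for $\ell\ge 3$; boundedness of $Dv_{\infty}$ combined with the PDE forces $D^{2}v_{\infty}\equiv 0$ and $\partial_{t}v_{\infty}=0$, so $v_{\infty}\equiv 0$ and $Du(\cdot,\cdot+t_{j})\to\vec{a}$ locally smoothly along this subsequence.

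The principal obstacle is that $\vec{a}$ could a priori depend on the chosen subsequence $\{t_{j}\}$. My plan to rule this out is to combine the localized cutoff argument of Lemma~\ref{lem_max} with Proposition~\ref{prop_detST}: testing the inequality $(\heat)(-\log\det\ST)\le-2|A|^{2}$ against the spacetime cutoff $\vph_{+}^{2}$ from the proof of Lemma~\ref{lem_max} and integrating in $t$ yields $\int_{0}^{\infty}\int_{L_{t}\cap\overline{B_{R}}}|A|^{2}\,d\mu\,dt<\infty$ for every $R>0$. Together with the asymptotic flatness from stage (ii) and the interior regularity of Theorem~\ref{thm_apriori}, this integrated energy bound gives enough decay of $|H|(0,t)$ (equivalently of $|\partial_{t}Du(0,t)|$) to make $t\mapsto Du(0,t)$ Cauchy, identifying $\vec{a}$ as independent of the subsequence and completing the proof.
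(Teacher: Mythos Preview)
Your Stage~(i) matches the paper. Stage~(ii) is a genuine alternative: the paper never blows down, but passes directly from the decay $|D^{\ell}u|^{2}\le c_{\ell}t^{2-\ell}$ ($\ell>2$) to a $C^{\infty}_{\mathrm{loc}}$ limit $p$ of $Du(\cdot,t)$, observes that the graph of $p=D\bar{u}$ is an entire minimal Lagrangian with bounded Hessian and $1+\lambda_{i}\lambda_{j}\ge 0$, and invokes the Bernstein theorem of \cite{TW02} to conclude it is an affine plane, hence (by the preserved gradient bound) a horizontal plane $\BR^{n}\times\{\vec{a}\}$. Your blow-down reaches the same conclusion by producing $D^{2}u\to 0$ locally, which renders the Bernstein step unnecessary; this is arguably cleaner.

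The genuine gap is in Stage~(iii). Both routes show that every sequence $t_{j}\to\infty$ has a subsequence along which $Du$ tends to a constant; what remains is to prove this constant is independent of the subsequence, and your sketch does not close. Two concrete problems. First, Proposition~\ref{prop_detST} requires \emph{strict} $2$-convexity, whereas Theorem~\ref{thm_2convexini} only yields $2$-convexity with no lower bound on $\det\ST$, so $-\log\det\ST$ need not be finite and the inequality $(\heat)(-\log\det\ST)\le -2|A|^{2}$ is unavailable. Second, even granting $\int_{0}^{\infty}\!\int_{L_{t}\cap B_{R}}|A|^{2}\,d\mu\,dt<\infty$, this is $L^{2}$-in-time control of a spatial integral, while making $t\mapsto Du(0,t)$ Cauchy via $\partial_{t}D_{k}u=g^{ij}D_{ijk}u$ requires $\int^{\infty}|A|(0,t)\,dt<\infty$, i.e.\ pointwise $L^{1}$-in-time; no interpolation with the available bounds $|D^{\ell}u|=O(t^{(2-\ell)/2})$ bridges this gap, since $t^{-1/2}\notin L^{1}([1,\infty))$. (Separately, the cutoff $\vph_{+}$ of Lemma~\ref{lem_max} vanishes once $t>R^{2}/(2n)$, so the time integration to infinity needs its own justification.) You have correctly located the delicate point---the paper's proof passes over it in one sentence---but the integrated-energy route you propose does not resolve it as stated.
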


\begin{proof}
The key step is to show that the bounded gradient condition is preserved along the flow.  Denote $\pl u/\pl{x^k}$ by $u_k$, $\pl^2u/\pl x^i\pl x^j$ by $u_{ij}$, etc.  By \eqref{LMCF0} and Remark \ref{rmk_dangle},
\begin{align}
    \frac{\pl}{\pl t} u_k &= g^{ij}u_{ijk}
\label{eqn_1der} \end{align}
where $g^{ij}$ is the inverse of $g_{ij} = \dt_{ij} + \sum_k u_{ik}u_{jk}$.  It follows that
\begin{align*}
    \frac{\pl}{\pl t}\big[c-\sum_k (u_k)^2\big] &= -2\sum_k u_k g^{ij}u_{ijk} \\
    &= g^{ij}\pl_i\pl_j\big[c-\sum_k(u_k)^2\big] + 2\sum_k g^{ij}u_{ki}u_{kj} ~,
\end{align*}
and hence
\begin{align}
    \frac{\pl}{\pl t}\big[c-\sum_k (u_k)^2\big] - g^{ij}\pl_i\pl_j\big[c-\sum_k(u_k)^2\big] &\geq 0 ~.
\end{align}

Since $*\Om\geq\vep_1$ along the flow, $g^{ij}$ is uniformly bounded over $\BR^n\times[0,\infty)$.  According to Theorem \ref{thm_2convexini}, $t|D^3u|^2$ is uniformly bounded.  With \eqref{eqn_1der}, there exits for any $T>0$ a constant $C_T$ such that $|u_k|\leq C_T$ on $\BR^n\times[0,T]$.  Thus, there is a $C'_T>0$ such that $\big[c-\sum_k(u_k)^2\big] \geq -C'_T$ on $\BR^n\times[0,T]$.  By the maximum principle in \cite{Fr64}*{Theorem 9 on p.43}, $c-\sum_k(u_k)^2\geq 0$ on $\BR^n\times[0,T]$.

With $|D^\ell u|\leq c_\ell t^{2-\ell}$ for any $\ell>2$, $Du(x,t)$ converges in $C^\infty_{\text{loc}}(\BR^n,\BR^n)$ to some $p(x)$ as $t\to\infty$.  Since the convergence is $C^\infty_{\text{loc}}$, the image of $(x,p(x))$ is still a minimal Lagrangian submanifold.  Thus, $p(x) = D\bar{u}(x)$ for some $\bar{u}(x)\in C^\infty(\BR^n)$ satisfying $|D^2\bar{u}|\leq K$ for some $K>0$ and $1+\ld_i\ld_j\geq0$.  According to the Bernstein theorem \cite{TW02}*{Theorem A}, $(x,D\bar{u}(x))$ must be an affine subspace.  Since $\sum_k(\pl_k\bar{u})^2 \leq c$, the affine subspace must be $\BR^n\times\{a\}$.
\end{proof}

\subsection{Convergence to Self-Expanders}

A solution $F:L\times(0,\infty)\to\BR^n$ to the mean curvature flow is called a self-expander if the submanifold $F(L,t)$ is the dilation of $F(L,1)$ with the factor $\sqrt{t}$.  It is a natural model for immortal solutions to the mean curvature flow, and can also be used to study the mean curvature flow of conical singularities.  In our setting $F(x,t) = (x,Du)$, one finds that being a self-expander means that
\begin{align}
    u(x,1) &= t\cdot u\left( \frac{x}{\sqrt{t}},1 \right)
\end{align}
for any $t>0$.  Therefore, for the entire Lagrangian mean curvature flow \eqref{LMCF0}, $u:L\times(0,\infty)\to \BR^n$ is a self-expander if and only if $u_1 = u(\,\cdot\,,1)$ obeys
\begin{align}
    \frac{1}{\ii} \frac{\det(\bfI + \sqrt{-1}D^2u_1)}{\sqrt{\det(\bfI+(D^2u_1)^2)}} - u_1 + \oh\ip{Du_1}{x} &= 0
\label{self-exp0} \end{align}

In \cite{CCH12}*{Theorem 1.2}, it is shown that if the initial condition $u_0$ is strictly distance-decreasing, and $(x,Du_0(x))$ is asymptotic to a cone at spatially infinity, then the rescaled flow converges to a self-expander.  In the following proposition, we prove that the result holds true in the $2$-convex case.

\begin{thm} \label{thm_conv_expander}
    Let $u_0\in C^2(\BR^n)$ be a $2$-convex function with $\sup_{x\in\BR^n}|D^2 u_0|^2\leq c$ for some $c>0$, and
    \begin{align*}
        \lim_{\mu\to\infty} \frac{u_0(\mu x)}{\mu^2} &= U_0(x) ~,
    \end{align*}
    for some $U_0(x)$.  Let $u(x,t)$ be the solution to \eqref{LMCF0} given by Theorem \ref{thm_2convexini}.  Then, $u(\mu x,\mu^2 t)/\mu^2$ converges to a smooth self-expanding solution $U(x,t)$ to \eqref{LMCF0} in $C^\infty_{\text{loc}}(\BR^n\times(0,\infty))$ as $\mu\to\infty$.  As $t\to 0$, $U(x,t)$ converges to $U_0(x)$ in $C^0_{\text{loc}}(\BR^n)$.
\end{thm}

\begin{proof}
It is straightforward to see that for any $\mu>0$,
\begin{align*}
    u_\mu(x,t) &= \frac{1}{\mu^2}u(\mu x,\mu^2 t)
\end{align*}
is a solution to \eqref{LMCF0} with initial condition $u_{\mu}(x,0) = \mu^{-2}u_0(\mu x)$.  Since the spatial Hessian remains unchanged in this rescaling, it follows from Theorem \ref{thm_2convexini} that
\begin{align*}
    \left|D^\ell u_\ld(x,t)\right| &\leq c_\ell\, t^{1-\frac{\ell}{2}}
\end{align*}
for any $\ld>0$, $\ell>2$ and on $\BR^n\times(0,\infty)$.  With Remark \ref{rmk_dangle}, there exists for any $m\geq1$ and $\ell\geq0$ a constant $c_{m,\ell}>0$ such that
\begin{align*}
    \left| \frac{\pl^m}{\pl t^m}D^\ell u_\mu \right| &\leq c_{m,\ell}\,t^{1-m-\frac{\ell}{2}} ~.
\end{align*}

It is clear that $u_\mu(0,0)$ and $Du_\mu(0,0)$ are uniformly bounded for $\mu\geq1$.  These estimates together with the Arzel\`a--Ascoli theorem implies that $u_\mu(x,t)$ converges to some $U(x,t)$ in $C^\infty_{\text{loc}}(\BR^n\times(0,\infty))$ as $\mu\to\infty$.  Moreover, $U(x,t)$ satisfies the equation \eqref{LMCF0}, is $2$-convex for all $t$, and $*\Om\geq\vep_1$ for all $t$.  It follows from the construction that $U(x,t)$ satisfies $U(\mu x,\mu^2t) = \mu^2\,U(x,t)$ for any $\mu>0$, and thus $U(x,t)$ is a self-expander.

Since $|\pl U/\pl t|\leq c_{1,0}$, $U(x,t)$ converges to a function $U(x,0)$ as $t\to0$.  Again by $|\pl u_\mu/\pl t|\leq c_{1,0}$, one finds that
\begin{align*}
    U(x,0) &= \lim_{t\to 0}\lim_{\mu\to\infty} u_\mu(x,t) = \lim_{\mu\to\infty} u_\mu(x,0) = U_0(x) ~.
\end{align*}
It finishes the proof of this proposition.
\end{proof}

On the other hand, it is shown in \cite{CCH12}*{Lemma 7.1} that for any $U_0(x)$ which is strictly distance-decreasing and homogeneous of degree $2$, there exists a self-expander to \eqref{LMCF0} which is strictly distance-decreasing for all time.  A function is said to be homogeneous of degree $2$ if
\begin{align*}
    U_0(x) = \frac{1}{\ld^2}U_0(\ld x)
\end{align*}
for all $\ld>0$.  If $U_0$ is $C^2$ over $\BR^n$, we can invoke Theorem \ref{thm_longt_C2}.  It is more interesting to assume that $U_0$ is $C^2$ only on $\BR^n\setminus\{0\}$, and it requires some more work to construct the solution.

\begin{prop} \label{prop_exist_expander}
    Suppose that $U_0:\BR^n\to\BR$ is homogeneous of degree $2$, is $C^2$ on $\BR^n\setminus\{0\}$, and satisfy \eqref{bound1} on $\BR^n\setminus\{0\}$.   Then, \eqref{LMCF0} admits a unique self-expanding solution $u(x,t)$ in the space $C^0(\BR^n\times[0,\infty))\cap C^\infty(\BR^n\times(0,\infty))$ with initial condition $U_0$ such that $u(x,t) = t\cdot u(x/\sqrt{t},1)$ and \eqref{bound1} is preserved along the flow.

    Moreover, there exists $c_\ell = c_\ell(\vep_1)$ for any $\ell>2$ such that $\sup_{x\in\BR^n}|D^\ell u|^2 \leq c_\ell t^{2-\ell}$ for any $t>0$.
\end{prop}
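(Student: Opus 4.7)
The plan is to obtain $u$ as a limit of smooth flows produced by Theorem~\ref{thm_longt_C2}. Since $U_0$ is only $C^2$ on $\BR^n\setminus\{0\}$, the first step is to approximate it by a sequence $U_0^k\in C^2(\BR^n)$ that agrees with $U_0$ outside the ball $B_{1/k}(0)$ and satisfies \eqref{bound1} with constants $\vep_{1,k}\to\vep_1$ and $\vep_{2,k}\to\vep_2$. Inside $B_{1/k}(0)$ one must smoothly extend $U_0$ across the $D^2$-singularity at the origin while preserving strict $2$-convexity. Because the $2$-convex region in the space of symmetric matrices is non-convex---matrices on its hyperbolic branch $\{1+\ld_i\ld_j=0\}$ need not have their convex combinations inside---a direct interpolation between $H(\omega)=D^2U_0(\omega)$ on $\partial B_{1/k}$ and a model symmetric matrix at the origin is in general not $2$-convex. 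I would instead construct the extension via a path that translates in the positive cone $C_\tau$ from Lemma~\ref{lem_pos_cone}, in the spirit of the booster functions $F_k$ of Lemma~\ref{lem_booster}. Since $|U_0(x)|=O(|x|^2)=O(1/k^2)$ inside $B_{1/k}$, the contribution of the cutoff derivatives to the Hessian remains $O(1)$ uniformly in $k$, while the cone shift keeps the eigenvalue pair inside $Q_{\dt_1,\dt_2}$.

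With such $U_0^k$ in hand, Theorem~\ref{thm_longt_C2} yields smooth solutions $u^k(x,t)$ on $\BR^n\times[0,\infty)$ with \eqref{bound1} preserved and with derivative decay $|D^\ell u^k|^2\leq c_\ell t^{2-\ell}$ depending only on $\vep_{1,k}$, hence uniform as $k\to\infty$. The Arzel\`{a}--Ascoli theorem gives a subsequence converging in $C^\infty_{\text{loc}}(\BR^n\times(0,\infty))$ to a smooth solution $u$ of \eqref{LMCF0}, and a uniform bound on $\pl_t u^k$ from Remark~\ref{rmk_dangle}, together with $U_0^k\to U_0$ in $C^0_{\text{loc}}(\BR^n)$, upgrades the convergence to $u\in C^0(\BR^n\times[0,\infty))$ with $u(\cdot,0)=U_0$.

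The self-expanding identity $u(x,t)=t\,u(x/\sqrt{t},1)$ then follows from uniqueness and the parabolic scaling invariance of \eqref{LMCF0}. By Chen--Pang \cite{CP09}, $u$ is the unique solution in the prescribed class with initial datum $U_0$. For any $\nu>0$, the rescaling $u_\nu(x,t):=\nu^{-2}u(\nu x,\nu^2 t)$ again solves \eqref{LMCF0}; since $U_0(\nu x)/\nu^2=U_0(x)$ by homogeneity, $u_\nu(\cdot,0)=U_0$, so $u_\nu=u$. To establish preservation of \eqref{bound1} with the original $\vep_1,\vep_2$, I apply the localized maximum principle \eqref{local_est} of Lemma~\ref{lem_max} to each $u^k$, exactly as in the last paragraph of the proof of Theorem~\ref{thm_longt_C2}: for fixed $(x_0,t_0)$ and a compact neighborhood $K_0$, choose $R$ large enough that $\vph=R^2-|x|^2-|Du^k|^2-2nt$ is positive on $K_0$, obtaining
\[
\max_{K_0}\,[(*\Om)\text{ of }u^k]^{-1}\leq(1+\ep_0)\max_{\overline{B_R(0)}}\,[(*\Om)\text{ of }U_0^k]^{-1}.
\]
Since $U_0^k=U_0$ outside $B_{1/k}$ and $\vep_{1,k}\to\vep_1$, the right-hand side tends to $\vep_1^{-1}$ as $k\to\infty$, yielding $*\Om\geq\vep_1$ for $u$. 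An analogous argument via Proposition~\ref{prop_detST} gives $\det\ST\geq\vep_2$, and the derivative decay descends directly from that of $u^k$.

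The main obstacle is the approximation step: producing $U_0^k\in C^2(\BR^n)$ that agree with $U_0$ outside $B_{1/k}$, are globally strictly $2$-convex with $\vep_{i,k}\to\vep_i$, and simultaneously have Hessian norms in $B_{1/k}$ compatible with $*\Om\geq\vep_{1,k}$. Because the $2$-convex region is non-convex, naive mollification fails; the cone-shift construction of Lemma~\ref{lem_pos_cone}, localized to a neighborhood of the origin, is essential.
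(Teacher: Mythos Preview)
Your overall strategy is exactly the paper's: approximate $U_0$ near the origin by functions amenable to Theorem~\ref{thm_longt_C2}, pass to the limit, and deduce the self-expanding property from scaling combined with Chen--Pang uniqueness. The argument you give for $u_\nu=u$ and for recovering \eqref{bound1} via the localized estimate \eqref{local_est} coincides with the paper's.

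The difference lies in the approximation step, which you correctly flag as the main obstacle. You insist on $U_0^k=U_0$ outside $B_{1/k}(0)$ and propose a compactly supported modification built from cutoffs plus a cone-shift. This is more delicate than necessary: a smooth function supported in $B_{1/k}$ cannot have Hessian lying in the positive cone $C_\tau$ throughout (its Hessian integrates to zero), so the transition layer will inevitably push eigenvalue pairs in the wrong direction, and controlling that while matching $U_0$ to second order on $\partial B_{1/k}$ is not straightforward. The paper sidesteps this by dropping the constraint that the approximant agree with $U_0$ anywhere. It introduces a new radial booster $E_k$ (Lemma~\ref{lem_booster0}, the mirror of Lemma~\ref{lem_booster} but acting at the origin rather than at infinity) with $D^2E_k=\bfI$ on $|x|\leq 1/k$ and $E_k\to 0$ locally. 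Then $D^2(U_0+E_k)=D^2U_0+\bfI$ on $0<|x|\leq 1/k$, and the key point is that this already lies in the \emph{convex} set $\{\dt\leq B\leq 1/\dt\}$ of positive-definite matrices. Hence the mollification step built into the proof of Theorem~\ref{thm_longt_C2} handles the singularity at the origin with no extra work, just as the $F_m$-shift handled infinity there. The paper then runs the \emph{argument} (not the statement) of Theorem~\ref{thm_longt_C2} with initial data $U_0+E_k$, and finally lets $k\to\infty$.

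In short: your plan is correct, but you have made the regularization harder than it needs to be. Relaxing ``$U_0^k=U_0$ outside $B_{1/k}$'' to ``$U_0^k\to U_0$ locally'' lets a global, explicit booster do all the work.
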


It follows from the assumption that $DU_0$ is uniformly Lipschitz.  The following lemma is analogous to Lemma \ref{lem_booster}, and will be used to handle $U_0$ near the origin.

\begin{lem} \label{lem_booster0}
    Given any $\tau>1$,  there exists a sequence of smooth functions $\{E_k(x)\}_{k\in\BN}$ with the following significance.
    \begin{enumerate}
        \item $D^2E_k = \bfI$ when $|x|\leq 1/k$.
        \item $0< D^2E_k\leq \tau$ everywhere.
        \item At any $x\in\BR^n$, the ratio between any two eigenvalues of $D^2E_k(x)$ belongs to $[1/\tau,\tau]$.
        \item For any $R>0$, $E_k$ converges to $0$ uniformly over $\overline{B_R(0)}$ as $k\to\infty$, so do their derivatives.
    \end{enumerate}
\end{lem}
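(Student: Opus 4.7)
The plan is to mirror the radial construction of Lemma \ref{lem_booster} with the roles of $0$ and $\infty$ interchanged. The ``identity-Hessian'' region will be placed on the shrinking ball $B_{1/k}(0)$ rather than on the exterior $\{|x|\geq k\}$, and the transition will be arranged so that the radial derivative $f_k=E_k'$ stays small on any fixed $\overline{B_R(0)}$ when $k$ is large.

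Concretely, I write $E_k$ as a radial function and parametrize it via $u_k(r):=rf_k'(r)/f_k(r)$, so that $f_k(r)=f_k(r_0)\exp\bigl(\int_{r_0}^r u_k(\rho)/\rho\,\dd\rho\bigr)$ exactly as in Lemma \ref{lem_booster}, and the Hessian eigenvalues $f_k'$ and $f_k/r$ have ratio exactly $u_k$. I would choose a smooth non-increasing $u_k:[0,\infty)\to[1/\tau,1]$ with $u_k=1$ on $[0,1/k]$, $u_k=1/\tau$ on $[2/k,\infty)$, and a smooth interpolation on $[1/k,2/k]$; then take base point $r_0=1/k$ with $f_k(1/k)=1/k$, and set $E_k(r)=\int_0^r f_k(\rho)\,\dd\rho$.

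Properties (i)--(iii) then follow directly from the construction. On $[0,1/k]$ one reads off $f_k(r)=r$, hence $D^2E_k=\bfI$ there, which gives (i); since $u_k\leq 1$ for all $r$, both eigenvalues $f_k'=u_k f_k/r$ and $f_k/r$ are non-increasing, start from the value $1$, and therefore remain in $(0,1]\subset(0,\tau]$, which gives (ii); and the ratio of the two eigenvalues is exactly $u_k\in[1/\tau,1]\subset[1/\tau,\tau]$, which gives (iii).

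The decisive step will be (iv). On $[0,1/k]$ one has the trivial bound $f_k(r)\leq 1/k$, while on $[2/k,R]$ the explicit formula $f_k(r)=f_k(2/k)(rk/2)^{1/\tau}$ together with $f_k(2/k)\leq 2/k$ yields $f_k(r)\leq CR^{1/\tau}k^{1/\tau-1}$. Since $1/\tau<1$, both bounds tend to $0$, so $|DE_k|$, and after integration $|E_k|$, converges to $0$ uniformly on $\overline{B_R(0)}$. The subtle point I expect to flag is that the full Hessian $D^2E_k$ cannot converge uniformly on $\overline{B_R(0)}$, since $D^2E_k=\bfI$ on $B_{1/k}(0)$ by construction; however, the same formula gives $f_k(r)/r\leq C\epsilon^{1/\tau-1}k^{1/\tau-1}\to 0$ on every annulus $\{0<\epsilon\leq|x|\leq R\}$, so $D^2E_k\to 0$ uniformly away from the origin. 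This is the natural reading of ``derivatives'' in (iv) given that the intended use in Proposition \ref{prop_exist_expander} only assumes $U_0\in C^2(\BR^n\setminus\{0\})$, so that a $C^0_{\text{loc}}$ approximation across the origin and a $C^2_{\text{loc}}$ approximation away from it is precisely what is required.
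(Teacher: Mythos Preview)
Your construction is essentially identical to the paper's: the same radial ansatz, the same parametrization via $u_k(r)=rf_k'(r)/f_k(r)$, the same choice of a non-increasing $u_k$ equal to $1$ on $[0,1/k]$ and to $1/\tau$ on $[2/k,\infty)$, and the same base point $r_0=1/k$, $f_k(1/k)=1/k$. Your verification of (i)--(iii) is correct and in fact slightly sharper than needed, since you obtain $D^2E_k\leq 1$ rather than just $\leq\tau$.

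Your observation about (iv) is well taken and goes beyond what the paper records. You are right that $D^2E_k$ cannot converge to $0$ uniformly on all of $\overline{B_R(0)}$, since $D^2E_k(0)=\bfI$ for every $k$; the bound $f_k(r)/r\leq (2/k)^{1-1/\tau}r^{1/\tau-1}$ only yields uniform convergence of the Hessian on annuli $\{\epsilon\leq|x|\leq R\}$. The paper's proof simply asserts that ``the corresponding $E_k(r)$ satisfies the assertions of this lemma'' without addressing this point. Your reading---that (iv) should be understood as $C^0$ (or $C^1$) convergence on $\overline{B_R(0)}$ together with $C^2$ convergence away from the origin---is exactly what the application in Proposition~\ref{prop_exist_expander} requires, since $U_0$ is only assumed $C^2$ on $\BR^n\setminus\{0\}$.
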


\begin{proof}
As in the proof of Lemma \ref{lem_booster}, let $E = E(r)$ where $r = |x|$.  Denote $E'(r)$ by $e(r)$.
Let
\begin{align}
    u(r) &= e'(r) \frac{r}{e(r)} ~,
\end{align}
and then
\begin{align}
    e(r) &= e(r_0) \exp\left(\int_{r_0}^r \frac{u(\rho)}{\rho}\dd\rho \right) ~.
\end{align}

For any $k\in\BN$, choose a smooth function $u_k(r)$ for $r\geq0$ with
\begin{itemize}
    \item $u_k(r) = 1$ if $r\leq 1/k$;
    \item $u_k(r) = 1/\tau$ if $r\geq 2/k$;
    \item $u'_k(r)\leq 0$ for all $r$.
\end{itemize}
Set $r_0$ to be $1/k$, and set $e_k(1/k)$ to be $1/k$.  It follows that $e_k(r) = r$ for $r\leq k$.  When $r\geq 1/k$,
\begin{align*}
    e_k(r) &= \frac{1}{k}\exp\left(\int_{1/k}^r \frac{u(\rho)}{\rho}\dd\rho\right)
    \leq \frac{1}{k}\exp\left(\int_{1/k}^r \frac{1}{\rho}\dd\rho\right) = r ~.
\end{align*}
When $r\geq 2/k$,
\begin{align*}
    e_k(r) &= e_k(2/k)\exp\left(\int_{k/2}^{r} \frac{u(\rho)}{\rho}\dd\rho\right) \leq \left(\frac{2}{k}\right)^{1-\frac{1}\tau} {r^{\frac{1}{\tau}}} ~.
\end{align*}
The function $E_k(r)$ is determined by setting $E_k(0) = 0$.  It is not hard to see that the corresponding $E_k(r)$ satisfies the assertions of this lemma.
\end{proof}

\begin{proof}[Proof of Proposition \ref{prop_exist_expander}]
To start, note that there exists a $\dt>0$ such that $\dt\leq D^2(U_0+E_k)\leq 1/\dt$ on where $0<|x|\leq 1/k$.  With this observation, the argument of Theorem \ref{thm_longt_C2} works for the initial condition $U_0+E_k$.  Denote the solution by $\td{u}^k$.  By the same argument as that in Theorem \ref{thm_longt_C2}, $\td{u}^k$ (subsequentially) converges on compact subsets of $\BR^n\times[0,\infty)$, and converges smoothly on compact subsets of $\BR^n\times(0,\infty)$.  The limit, $u$, is a solution to \eqref{LMCF0} with initial condition $U_0$.

Since $U_0$ is homogeneous of degree $2$, $u_\mu(x,t) = \mu^{-2}\,u(\mu x,\mu^2 t)$ (for any $\mu>0$) is also a solution to \eqref{LMCF0} with initial condition $U_0$.  It follows from the uniqueness theorem in \cite{CP09} that $u_\mu(x,t) = u(x,t)$ for any $\mu>0$.  Hence, the solution is a self-expander.
\end{proof}

With the same argument as that in Theorem \ref{thm_2convexini}, Proposition \ref{prop_exist_expander} can be pushed to the $2$-convex case.

\begin{prop}
    Let $U_0:\BR^n\to\BR$ be homogeneous of degree $2$.  Suppose that on $\BR^n\setminus\{0\}$, $U_0$ is $C^2$, satisfies $*\Om\geq\vep_1$ for some $\vep_1>0$, and is $2$-convex.   Then, \eqref{LMCF0} admits a unique solution $u(x,t)$ in the space $C^0(\BR^n\times[0,\infty))\cap C^\infty(\BR^n\times(0,\infty))$ with initial condition $U_0$ such that $u(x,t) = t\cdot u(x/\sqrt{t},1)$.  Moreover, the two conclusions of Theorem \ref{thm_2convexini} hold true for $u(x,t)$.
\end{prop}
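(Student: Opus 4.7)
The plan is to mimic the passage from Theorem \ref{thm_longt_C2} to Theorem \ref{thm_2convexini}, but using Proposition \ref{prop_exist_expander} as the building block in place of Theorem \ref{thm_longt_C2}. First I would regularize $U_0$ by setting $W_0^k = U_0 + e^{-k}|x|^2$ for $k\in\BN$. This perturbation preserves homogeneity of degree $2$ (hence the self-similar structure), is $C^2$ on $\BR^n\setminus\{0\}$, and shifts each eigenvalue of the Hessian by $2e^{-k}$. Using the $2$-convexity of $U_0$, one checks $(\ld_i+2e^{-k})+(\ld_j+2e^{-k})\geq 4e^{-k}>0$ and $1+(\ld_i+2e^{-k})(\ld_j+2e^{-k})\geq 4e^{-2k}>0$ for $i\neq j$, so $W_0^k$ is strictly $2$-convex on $\BR^n\setminus\{0\}$ with $\det\ST$ bounded below by a positive constant depending on $k$.

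Because $|D^2U_0|$ is bounded on $\BR^n\setminus\{0\}$ (by $*\Om\geq\vep_1$ together with \eqref{bound_slope}, recalling that $D^2U_0$ is homogeneous of degree $0$ and the unit sphere is compact), the scalar $*\Om$ of $W_0^k$ converges uniformly on $\BR^n\setminus\{0\}$ to $*\Om$ of $U_0$. Hence for every small $\ep_0>0$ one has $*\Om \geq (1-\ep_0)\vep_1$ for $W_0^k$ once $k$ is large. Proposition \ref{prop_exist_expander} then yields self-expanding solutions $w^k$ with initial data $W_0^k$, satisfying the scaling identity $w^k(x,t)=t\,w^k(x/\sqrt{t},1)$ and derivative estimates $\sup_{\BR^n}|D^\ell w^k|^2\leq c_\ell t^{2-\ell}$ in which $c_\ell$ depends only on the $*\Om$-lower bound, hence only on $\vep_1$ for $k$ large. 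Combined with the bounds on $\pl_t w^k$ via Remark \ref{rmk_dangle} and the locally uniform convergence $W_0^k\to U_0$ (since $e^{-k}|x|^2\to 0$ locally uniformly and $U_0\in C^{1,1}(\BR^n)$), the Arzel\`a--Ascoli argument from the end of Theorem \ref{thm_2convexini} extracts a subsequence converging in $C^0_{\text{loc}}(\BR^n\times[0,\infty))\cap C^\infty_{\text{loc}}(\BR^n\times(0,\infty))$ to a solution $u$ of \eqref{LMCF0} with initial data $U_0$, inheriting both the scaling identity and the derivative bounds. Uniqueness in the asserted class comes from \cite{CP09}, and the self-expander property $u(x,t)=t\,u(x/\sqrt{t},1)$ can be re-derived directly from uniqueness applied to $u(\mu x,\mu^2 t)/\mu^2$ for $\mu>0$.

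Finally, to recover the sharp lower bound $*\Om\geq\vep_1$ on $u$ rather than the lossy $(1-\ep_0)\vep_1$, I would apply the localized maximum principle \eqref{local_est} from Lemma \ref{lem_max} on a compact neighborhood $K_0\Subset\BR^n\times(0,\infty)$ to each $w^k$, exactly as in the final paragraph of the proof of Theorem \ref{thm_2convexini}: first send $k\to\infty$ (using $\liminf_{k\to\infty}\inf_{\BR^n\setminus\{0\}}[(*\Om)\text{ of }W_0^k]\geq\vep_1$) and then let the weight $\ep_0\to 0$. The $2$-convexity of $u$ for $t>0$ passes to the limit from that of $w^k$ via the same localized maximum principle applied to $(\det\ST)^{-1}$.

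The main obstacle is not a deep analytic one but the careful bookkeeping of uniformity: one must ensure the $*\Om$-lower bound of $W_0^k$ is $\vep_1-o(1)$ as $k\to\infty$ \emph{uniformly} on $\BR^n\setminus\{0\}$, which is why the use of homogeneity (reducing boundedness of $D^2U_0$ to a compact-sphere statement) and the continuous dependence of $*\Om$ on the Hessian are both invoked. Once this uniformity is in place, everything else is an assembly of the self-expander construction of Proposition \ref{prop_exist_expander} with the limit-passage technique already exhibited in Theorem \ref{thm_2convexini}.
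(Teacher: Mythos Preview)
Your proposal is correct and follows essentially the same approach the paper indicates: the paper's own proof is just the one-line remark that the argument of Theorem \ref{thm_2convexini} (perturbing by $e^{-k}|x|^2$, applying the strictly $2$-convex existence result, and passing to the limit) carries over with Proposition \ref{prop_exist_expander} replacing Theorem \ref{thm_longt_C2}. Your observation that $e^{-k}|x|^2$ is itself homogeneous of degree $2$, so that the perturbed data remain in the scope of Proposition \ref{prop_exist_expander}, is exactly the point; the final appeal to the localized estimate \eqref{local_est} is not strictly needed here since Proposition \ref{prop_exist_expander} already preserves the initial $*\Om$ bound and $\liminf_{k\to\infty}\inf(*\Om\text{ of }W_0^k)\geq\vep_1$ suffices, but it does no harm.
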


\begin{bibdiv}
\begin{biblist}



\bib{CNS85}{article}{
   author={Caffarelli, L.},
   author={Nirenberg, L.},
   author={Spruck, J.},
   title={The Dirichlet problem for nonlinear second-order elliptic equations. III. Functions of the eigenvalues of the Hessian},
   journal={Acta Math.},
   volume={155},
   date={1985},
   number={3-4},
   pages={261--301},
}

\bib{CCH12}{article}{
   author={Chau, Albert},
   author={Chen, Jingyi},
   author={He, Weiyong},
   title={Lagrangian mean curvature flow for entire Lipschitz graphs},
   journal={Calc. Var. Partial Differential Equations},
   volume={44},
   date={2012},
   number={1-2},
   pages={199--220},
}

\bib{CCY13}{article}{
   author={Chau, Albert},
   author={Chen, Jingyi},
   author={Yuan, Yu},
   title={Lagrangian mean curvature flow for entire Lipschitz graphs II},
   journal={Math. Ann.},
   volume={357},
   date={2013},
   number={1},
   pages={165--183},
}

\bib{CP09}{article}{
   author={Chen, Jingyi},
   author={Pang, Chao},
   title={Uniqueness of unbounded solutions of the Lagrangian mean curvature flow equation for graphs},
   journal={C. R. Math. Acad. Sci. Paris},
   volume={347},
   date={2009},
   number={17-18},
   pages={1031--1034},
}


\bib{EH89}{article}{
   author={Ecker, Klaus},
   author={Huisken, Gerhard},
   title={Mean curvature evolution of entire graphs},
   journal={Ann. of Math. (2)},
   volume={130},
   date={1989},
   number={3},
   pages={453--471},
}

\bib{EH91}{article}{
   author={Ecker, Klaus},
   author={Huisken, Gerhard},
   title={Interior estimates for hypersurfaces moving by mean curvature},
   journal={Invent. Math.},
   volume={105},
   date={1991},
   number={3},
   pages={547--569},
}

\bib{Evans98}{book}{
   author={Evans, Lawrence C.},
   title={Partial differential equations},
   series={Graduate Studies in Mathematics},
   volume={19},
   publisher={American Mathematical Society, Providence, RI},
   date={1998},
   pages={xviii+662},
}

\bib{Fr64}{book}{
   author={Friedman, Avner},
   title={Partial differential equations of parabolic type},
   publisher={Prentice-Hall, Inc., Englewood Cliffs, N.J.},
   date={1964},
   pages={xiv+347},
}



\bib{Huisken84}{article}{
   author={Huisken, Gerhard},
   title={Flow by mean curvature of convex surfaces into spheres},
   journal={J. Differential Geom.},
   volume={20},
   date={1984},
   number={1},
   pages={237--266},
}

 \bib{Huisken90}{article}{
   author={Huisken, Gerhard},
   title={Asymptotic behavior for singularities of the mean curvature flow},
   journal={J. Differential Geom.},
   volume={31},
   date={1990},
   number={1},
   pages={285--299},
}


\bib{Kato76}{book}{
   author={Kato, Tosio},
   title={Perturbation theory for linear operators},
   series={Grundlehren der Mathematischen Wissenschaften, Band 132},
   edition={2},
   publisher={Springer-Verlag, Berlin-New York},
   date={1976},
   pages={xxi+619},
}




\bib{NY11}{article}{
    author={Nguyen, Tu A.},
    author={Yuan, Yu},
    title={A priori estimates for Lagrangian mean curvature flows},
    journal={Int. Math. Res. Not. IMRN},
    date={2011},
    number={19},
    pages={4376--4383},
}


\bib{Smoczyk96}{article}{
   author={Smoczyk, Knut},
   title={A canonical way to deform a Lagrangian submanifold},
   journal={},
   volume={},
   date={},
   number={},
   pages={},
   eprint={arXiv:dg-ga/9605005},
   status={preprint},
}


\bib{SW02}{article}{
   author={Smoczyk, Knut},
   author={Wang, Mu-Tao},
   title={Mean curvature flows of Lagrangians submanifolds with convex potentials},
   journal={J. Differential Geom.},
   volume={62},
   date={2002},
   number={2},
   pages={243--257},
}

\bib{Tao12}{book}{
   author={Tao, Terence},
   title={Topics in random matrix theory},
   series={Graduate Studies in Mathematics},
   volume={132},
   publisher={American Mathematical Society, Providence, RI},
   date={2012},
   pages={x+282},
}


\bib{TTW22l}{article}{
   author={Tsai, Chung-Jun},
   author={Tsui, Mao-Pei},
   author={Wang, Mu-Tao},
   title={Mean curvature flows of two-convex Lagrangians},
   journal={J. Differential Geom.},
   volume={128},
   date={2024},
   number={3},
   pages={1269--1284},
}


\bib{TW02}{article}{
   author={Tsui, Mao-Pei},
   author={Wang, Mu-Tao},
   title={A Bernstein type result for special Lagrangian submanifolds},
   journal={Math. Res. Lett.},
   volume={9},
   date={2002},
   number={4},
   pages={529--535},
}

\bib{Y02}{article}{
    author={Yuan, Yu},
    title={A Bernstein problem for special Lagrangian equations},
    journal={Invent. Math.},
    volume={150}, 
    date={2002}, 
    number={1}, 
    pages={117–-125},
}

\end{biblist}
\end{bibdiv}

\end{document}